\documentclass[12pt,reqno]{amsart}

\usepackage[cal=cm,scr=euler]{mathalfa}
\usepackage{microtype}
\usepackage{mlmodern}
\usepackage{xspace}

\usepackage[utf8]{inputenc}
\usepackage[T1]{fontenc}

\usepackage{amsmath,amssymb,amsfonts,amsthm}
\usepackage{orcidlink}
\usepackage{array}

\usepackage[a4paper,margin=2.5cm,top=2.5cm,bottom=2.5cm,centering,headheight=3ex,headsep=4ex,vcentering]{geometry}

\usepackage{xcolor}
\definecolor{ao(english)}{rgb}{0.0, 0.0, 1.0}
\usepackage{hyperref}
\hypersetup{colorlinks=true,linkcolor=ao(english),citecolor=ao(english),urlcolor=ao(english)}

\numberwithin{equation}{section}

\theoremstyle{plain}
\newtheorem{theorem}{Theorem}[section]
\newtheorem{lemma}[theorem]{Lemma}
\newtheorem{corollary}[theorem]{Corollary}
\newtheorem{proposition}[theorem]{Proposition}
\newtheorem{conjecture}[theorem]{Conjecture}

\theoremstyle{definition}
\newtheorem{definition}[theorem]{Definition}
\newtheorem{remark}[theorem]{Remark}

\newcommand{\OPT}{\overline{\mathrm{OPT}}}
\newcommand{\Z}{\mathbb Z}
\newcommand{\Q}{\mathbb Q}
\newcommand{\Cheb}{T}
\newcommand{\Dick}{D}
\newcommand{\Wop}{\mathcal W}
\newcommand{\Top}{\mathcal T}
\newcommand{\Sop}{\mathcal S}
\newcommand{\Thop}{\Theta}
\newcommand{\Thev}{\widetilde\Theta}
\DeclareRobustCommand{\CompPkg}[1]{\texttt{\bfseries #1}}
\newcommand{\RaduRK}{\CompPkg{RaduRK}\xspace}
\newcommand{\ETA}{\CompPkg{ETA}\xspace}

\allowdisplaybreaks

\title[Internal congruences modulo powers of $2$]
{Internal congruences modulo powers of \(2\) for overpartition tuples
with odd parts}

\author[M. P. Saikia]{Manjil P. Saikia\,\orcidlink{0000-0002-2997-6731}}
\address{Mathematical and Physical Sciences Division, School of Arts \& Sciences, Ahmedabad University, Navrangpura, Ahmedabad 380009, Gujarat, India}
\email{manjil.saikia@ahduni.edu.in}

\author[P. Talukdar]{Prabal Talukdar\,\orcidlink{0009-0006-3713-1348}}
\address{Mathematical and Physical Sciences Division, School of Arts \& Sciences, Ahmedabad University, Navrangpura, Ahmedabad 380009, Gujarat, India}
\email{prabaltalukdar89@gmail.com}

\keywords{integer partitions, overpartitions, internal congruences,
\(2\)-adic analysis}

\subjclass[2020]{11P83, 05A17, 11P84}

\begin{document}

\begin{abstract}
Let \(\OPT_m(n)\) denote the number of overpartition \(m\)-tuples of \(n\)
into odd parts.  We prove that for every \emph{odd} \(m\ge1\) and every
\(i\ge3\),
\[
\sum_{n\ge0}\Bigl(\OPT_m\bigl(2^in\bigr)-\OPT_m\bigl(2^{i-1}n\bigr)\Bigr)q^n
\equiv
2^{\,i+1}\sum_{k\ge0}q^{(2k+1)^2}
\pmod{2^{\,i+2}} .
\]
Thus \(\OPT_m(2^in)\equiv\OPT_m(2^{i-1}n)\pmod{2^{i+1}}\), with equality of
\(2\)-adic valuations exactly at the odd squares.   The proof
is elementary and uniform in \(m\): a single family of integer polynomials,
given by a three-term recurrence, governs every \(U\)-operator identity
involved, and a divisibility statement supplies one power of \(2\)
per iteration.
\end{abstract}

\maketitle

\section{Introduction}\label{sec:introduction}

An \emph{overpartition} of \(n\) is a partition in which the first
occurrence of a part may be overlined, and an \emph{overpartition
\(m\)-tuple} of \(n\) is an \(m\)-tuple of overpartitions whose parts sum
to \(n\).  Write \(\OPT_m(n)\) for the number of overpartition
\(m\)-tuples of \(n\) all of whose parts are odd.  With the usual notation
\(f_k:=(q^k;q^k)_\infty=\prod\limits_{i\geq 0}(1-q^{k+ki})\), separating odd parts from even ones gives
\begin{equation}
\sum_{n\ge0}\OPT_m(n)q^n
=\prod_{\substack{j\ge1\\ j\ \mathrm{odd}}}
\left(\frac{1+q^{j}}{1-q^{j}}\right)^{m}
=\frac{f_2^{3m}}{f_1^{2m}f_4^{m}}
=A(q)^m,
\qquad
A(q):=\frac{f_2^3}{f_1^2f_4}.
\label{eq:OPTm-generating-function}
\end{equation}
The case \(m=1\) was first studied by Hirschhorn and Sellers \cite{HS}, the case
\(m=2\) by Lin \cite{Lin}, and the case \(m=3\) has recently gained a lot of attention
\cite{DremaSaikia,SSS,DSS,KeerthanaAnanyaRanganatha,Tang}.

An \emph{internal congruence} compares two subsequences of one sequence,
rather than asserting that a sequence vanishes on an arithmetic
progression.  Such congruences are comparatively scarce; infinite families
of them modulo prime powers were obtained recently by Chern and Sellers
\cite{ChernSellers} and by Chern and Tang \cite{ChernTang}, using
unitizing operators, modular equations and \(p\)-adic coefficient
estimates.  Our results are of this type, and our proof follows the same
broad philosophy, but the mechanism here is much more transparent.

Let \(\nu\) denote the \(2\)-adic valuation.  Our main result identifies
not merely a modulus, but the exact \(2\)-adic profile of the difference.

\begin{theorem}\label{thm:profile}
Let \(m\ge1\) be odd and let \(i\ge3\).  Then
\[
\sum_{n\ge0}
\Bigl(\OPT_m\bigl(2^in\bigr)-\OPT_m\bigl(2^{i-1}n\bigr)\Bigr)q^n
\equiv
2^{\,i+1}\sum_{k\ge0}q^{(2k+1)^2}
\pmod{2^{\,i+2}} .
\]
\end{theorem}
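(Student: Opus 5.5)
Write \(A(q)=f_2^3/(f_1^2f_4)\). By Gauss's identity,
\[
\frac{f_1^2}{f_2}=\varphi(-q),
\qquad
\frac{f_2^2}{f_4}=\varphi(-q^2),
\qquad
\varphi(q)=\sum_{n\in\Z}q^{n^2},
\]
so \(A(q)=\varphi(-q^2)/\varphi(-q)\). Set \(x:=\varphi(q)/\varphi(-q)\); then \(A(q)=x(q)\cdot\varphi(-q^2)/\varphi(q)\). It is cleaner to use \(A(q)=1/\varphi(-q)\cdot\varphi(-q^2)\) together with the \(2\)-dissection \(\varphi(q)=\varphi(q^4)+2q\psi(q^8)\).

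The plan is to work with the Hauptmodul-type quantity
\[
t:=\frac{\varphi(-q)^2}{\varphi(-q^2)^2}\quad\text{or}\quad
\frac{\varphi(q)}{\varphi(-q)},
\]
and to express \(U_2\) on powers of \(A\) as polynomials in a single function. Concretely, I would use the standard modular equations for \(\varphi(q)\) and \(\varphi(-q)\):
\[
\varphi(q)\varphi(-q)=\varphi(-q^2)^2,
\qquad
\varphi(q)^2+\varphi(-q)^2=2\varphi(q^2)^2 .
\]
These say that \(U_2\) sends \(\{\varphi(q)^a\varphi(-q)^b\}\) to expressions in \(\varphi(q),\varphi(-q)\) of the same weight. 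Applying \(U_2\) to \(A^m=(\varphi(-q^2)/\varphi(-q))^m\) gives \(\varphi(-q)^m\cdot U_2(\varphi(-q)^{-2m}\ldots)\). Via \(1/\varphi(-q)=\varphi(q)/\varphi(-q^2)^2\), this reduces to the even part of \(\varphi(q)^m\), i.e.\ to \(\sum_j\binom{m}{2j}\varphi(q^4)^{m-2j}(2q\psi(q^8))^{2j}\). Iterating, I expect
\[
\sum_n \OPT_m(2^in)q^n=P_i(y)
\]
for an explicit function \(y\) (something like \(y=\varphi(q)^2/\varphi(-q)^2\) or the ratio \(\varphi(-q)/\varphi(q)\), with \(U_2\) acting by \(y\mapsto\) a quadratic relation) and integer polynomials \(P_i\) defined by a three-term recurrence \(P_{i+1}=\alpha P_i+\beta P_{i-1}\) coming from the degree-2 modular relation (Chebyshev/Dickson type, matching the macros \(\Cheb,\Dick\)).

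The key steps, in order, are:
\begin{enumerate}
\item Establish the \(U_2\)-action: find \(y\) with \(U_2(y^k)\) expressible as a polynomial in \(y\), using the two modular equations above.
\item Derive the recurrence, showing that \(G_i:=\sum_n\OPT_m(2^in)q^n\) lies in \(\Z[y]\), and that \(G_i-G_{i-1}=U_2^{i-1}(U_2G_0-G_0)\) is governed by a fixed polynomial.
\item Prove a divisibility lemma: each application of \(U_2\) to the relevant difference gains exactly one factor of \(2\), with a controlled leading term. The base case \(i=3\) is checked directly: compute \(G_3-G_2\bmod 32\) and verify that it equals \(16\sum q^{(2k+1)^2}=8(\varphi(q)-\varphi(q^4))\).
\end{enumerate}
The target \(2^{i+1}\sum q^{(2k+1)^2}=2^{i}(\varphi(q)-\varphi(q^4))\) is itself natural in this framework, since \(\varphi(q)-\varphi(q^4)=2q\psi(q^8)\) and \(U_2\) maps \(\varphi(q)\) to \(\varphi(q)\). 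Hence, once the difference is congruent to \(2^{i}\bigl(\varphi(q)-\varphi(q^4)\bigr)+2^{i+2}(\ldots)\), the induction should propagate: \(U_2\) of the main term yields the main term times \(2\) modulo the higher power. Oddness of \(m\) enters in identifying the leading coefficient \(\binom{m}{1}\equiv1\), and similar odd binomials.

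The main obstacle is step 3: showing that the error terms gain one power of \(2\) per iteration uniformly in \(m\), not merely that the main term doubles. I would first try to prove a \(2\)-adic statement about the coefficients of the polynomials \(P_i\). The aim is to show that the coefficient of \(y^k\) is divisible by \(2^{i+\lfloor\cdot\rfloor}\) with a valuation growing in \(k\), so that only the lowest-degree term survives modulo \(2^{i+2}\) and it reduces to the theta series of odd squares.
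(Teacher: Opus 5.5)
There is a genuine gap. The one concrete propagation mechanism you propose is false, and the step that actually carries the proof is never formulated.

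The target $2^{i+1}\sum_{k\ge0}q^{(2k+1)^2}=2^{i}\bigl(\varphi(q)-\varphi(q^4)\bigr)$ is supported on odd exponents, so $U$ annihilates it. Indeed $U(\varphi(q))=\varphi(q^2)=U(\varphi(q^4))$, not $\varphi(q)$. Write $\mathcal D_i:=U^i(A^m)-U^{i-1}(A^m)=2^{i+1}\sum_k q^{(2k+1)^2}+2^{i+2}E$. Then $\mathcal D_{i+1}=U(\mathcal D_i)=2^{i+2}U(E)$. The main term contributes nothing, and the next main term comes entirely from $E \bmod 2$. That is information about $\mathcal D_i$ modulo $2^{i+3}$, which a congruence modulo $2^{i+2}$ does not contain.

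So no induction on congruences can close. You need an exact, $U$-stable closed form for the difference in which the gain of one power of $2$ per step is built in. For the same reason, the base case $i=3$ cannot be ``checked directly'' uniformly in $m$. The invariant you suggest at the end (coefficients of $P_i$ with valuations growing in the degree) is aimed at the wrong object, since $G_i$ has constant term $1$. In any case a lower bound is not enough: to pin down the odd-square term you need an \emph{exact} valuation at every step.

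Your route to $U(A^m)$ via the even part of $\varphi(q)^m$ is sound. Using $4q\psi(q^4)^2=\varphi(q^2)^2-\varphi(-q)^2$, it gives $\sum_j\binom{m}{2j}\lambda^{m-2j}(\lambda^2-1)^j$ with $\lambda=\varphi(q^2)/\varphi(-q)$, which is exactly the paper's $\lambda C_m(\xi)$.

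What is missing is the following structure.
\begin{itemize}
\item \textbf{Stable module.} With $\alpha=A^2$ and $\xi=A^4$, the module $\alpha\lambda\,\Z[\xi]$ is $U$-stable: $U(\lambda\xi^j)=\alpha\lambda\,C_{4j+1}(\xi)$ and $U(\alpha\lambda\xi^j)=\alpha\lambda\,C_{4j+3}(\xi)$, where $X^n+X^{-n}=(X+X^{-1})\,C_n\bigl((X^2+X^{-2})/2\bigr)$. So the iterates are a fixed prefactor times a polynomial in $\xi$, and the three-term recurrence lives in the index $n$ of $C_n$, not in the iteration index $i$.
\item \textbf{Closed form for the difference.} Writing $U^2(A^m)=\alpha\lambda P(\xi)$, one gets $\mathcal D_i=\alpha\lambda(\xi-1)R_i(\xi)$ with $R_{i+1}=\Theta(R_i)$ and $\Theta(x^j)=(C_{4j+7}-C_{4j+3})/(x-1)$.
\item \textbf{Invariant.} The statement to propagate is $\nu(R_i)=\nu(R_i(1))=i-2$. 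One has $\nu(\Theta(R))\ge\nu(R)+1$ because $C_n\equiv1\pmod 2$ and $C_n(1)=1$. The formula $\Theta(R)(1)=10R(1)+8R'(1)$ shows the value at $1$ gains exactly one factor of $2$.
\item \textbf{Base case.} This is uniform in $m$ via $R_3(1)=4P''(1)+9P'(1)+2$ with $P'(1)\equiv0\pmod 4$. Oddness of $m$ enters arithmetically here, through $C_m'(1)=(m^2-1)/4$ being even, not through $\binom{m}{1}\equiv1$.
\item \textbf{Source of the odd squares.} They are not a propagated theta-series main term. They come from the factor $\xi-1=8(a+a^2)(1+2a+2a^2)$, with $a=(A-1)/2$ and $a+a^2\equiv\sum_k q^{(2k+1)^2}\pmod 2$. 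This is combined with $R_i(\xi)/2^{i-2}\equiv R_i(1)/2^{i-2}\equiv1\pmod 2$.
\end{itemize}
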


Since the right-hand side is supported on the odd squares, we obtain at
once both a congruence and its sharpness.

\begin{corollary}\label{cor:profile}
Let \(m\ge1\) be odd, \(i\ge3\) and \(n\ge1\).  Then
\[
\OPT_m\bigl(2^in\bigr)\equiv\OPT_m\bigl(2^{i-1}n\bigr)\pmod{2^{\,i+1}},
\]
and
\[
\nu\Bigl(\OPT_m\bigl(2^in\bigr)-\OPT_m\bigl(2^{i-1}n\bigr)\Bigr)
\begin{cases}
=i+1, & \text{if \(n\) is an odd square,}\\[2pt]
\ge i+2, & \text{otherwise.}
\end{cases}
\]
\end{corollary}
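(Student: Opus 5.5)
The corollary follows directly from Theorem~\ref{thm:profile} by comparing coefficients; no further analysis is needed. Fix odd \(m\ge1\), \(i\ge3\) and \(n\ge1\). Extracting the coefficient of \(q^n\) on both sides of the congruence in Theorem~\ref{thm:profile} gives
\[
\OPT_m\bigl(2^in\bigr)-\OPT_m\bigl(2^{i-1}n\bigr)\equiv 2^{\,i+1}\,\epsilon(n)\pmod{2^{\,i+2}},
\]
where \(\epsilon(n)=1\) if \(n\) is an odd square and \(\epsilon(n)=0\) otherwise.

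This uses one small fact: the series \(\sum_{k\ge0}q^{(2k+1)^2}\) has coefficients in \(\{0,1\}\). Indeed, the map \(k\mapsto(2k+1)^2\) is injective on \(k\ge0\), so each odd square occurs exactly once. The coefficientwise reading is legitimate because congruence of power series modulo \(2^{i+2}\) means congruence of every coefficient.

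Now write \(d=\OPT_m(2^in)-\OPT_m(2^{i-1}n)\) and consider the two cases.
\begin{itemize}
\item If \(n\) is not an odd square, then \(d\equiv0\pmod{2^{i+2}}\). Hence \(\nu(d)\ge i+2\) (with \(\nu(0)=\infty\)), and in particular \(d\equiv0\pmod{2^{i+1}}\).
\item If \(n\) is an odd square, then \(d=2^{i+1}+2^{i+2}t\) for some integer \(t\). So \(d=2^{i+1}(1+2t)\) with \(1+2t\) odd. Therefore \(\nu(d)=i+1\) exactly, and again \(2^{i+1}\mid d\).
\end{itemize}
Together the two cases give both the congruence modulo \(2^{i+1}\) and the stated valuation profile.

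The argument is purely formal, so there is no real obstacle. The only points to state carefully are that the coefficients of \(\sum_{k\ge0}q^{(2k+1)^2}\) are exactly \(0\) or \(1\), and that \(\nu(d)\ge i+2\) in the second case is read with the convention \(\nu(0)=\infty\). The restriction \(n\ge1\) is harmless: at \(n=0\) both terms equal \(\OPT_m(0)=1\), so the difference vanishes, consistent with \(0\) not being an odd square.
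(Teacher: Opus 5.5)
Your proposal is correct and is the same argument the paper uses: read off the coefficient of \(q^n\) in Theorem~\ref{thm:profile}, where \(\sum_{k\ge0}q^{(2k+1)^2}\) has coefficient \(1\) exactly at the odd squares and \(0\) elsewhere. This gives valuation exactly \(i+1\) at odd squares and at least \(i+2\) otherwise. Your extra remarks on the injectivity of \(k\mapsto(2k+1)^2\), the convention \(\nu(0)=\infty\), and the case \(n=0\) are accurate and just make explicit what the paper leaves implicit.
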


The two remaining indices ($i=1,2$) behave differently, and depend on \(m\) modulo
\(4\).

\begin{theorem}\label{thm:boundary}
Let \(m=2k+1\ge1\) be odd.  Then
\begin{align*}
 \OPT_m(2)-\OPT_m(1)&\equiv-4k\pmod{16},\\
 \OPT_m(4)-\OPT_m(2)&\equiv4\pmod{16},\\
 \OPT_m(2n)&\equiv\OPT_m(n) \pmod 4,\\
 \OPT_m(4n)&\equiv\OPT_m(2n) \pmod 4,
\end{align*} for every \(n\ge0\).
\end{theorem}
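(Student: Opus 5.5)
The plan is to work entirely with the explicit form of the generating function \eqref{eq:OPTm-generating-function}. Write \(\varphi(q)=\sum_{n\in\Z}q^{n^2}=f_2^5/(f_1^2f_4^2)\) and \(\varphi(-q)=f_1^2/f_2\). Then
\[
A(q)=\frac{f_2^3}{f_1^2f_4}=\frac{\varphi(q)}{\varphi(-q^2)} .
\]
The two mod-\(16\) statements concern only finitely many coefficients, so they will be proved by direct expansion. The two mod-\(4\) statements come from a closed form for \(A(q)^m \bmod 4\).

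\emph{The mod \(4\) congruences.} Put \(S(q)=\sum_{n\ge1}q^{n^2}\), so that \(\varphi(q)=1+2S(q)\) and \(\varphi(-q^2)=1+2S(-q^2)\equiv 1+2S(q^2)\pmod 4\). The sign change does not matter because the coefficients of \(S\) are doubled. Since \((1+2x)^{-1}\equiv 1-2x\equiv 1+2x\pmod 4\) for any power series \(x\) with integer coefficients, we get
\[
A(q)\equiv 1+2S(q)+2S(q^2)\pmod 4 .
\]
Writing \(A=1+2B\), the binomial theorem gives \(A^m\equiv 1+2mB\equiv 1+2B\equiv A\pmod 4\) for odd \(m\). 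Hence, for \(n\ge1\),
\[
\OPT_m(n)\equiv 2\bigl([n=\square]+[n/2=\square]\bigr)\pmod 4,
\]
where \([\,\cdot\,]\) is the indicator and \(\square\) means a positive square. Now \(2n\) is a square iff \(n=2k^2\) iff \(n/2\) is a square, so \(\OPT_m(2n)\equiv\OPT_m(n)\pmod 4\). Likewise \(4n\) is a square iff \(n\) is, so \(\OPT_m(4n)\equiv 2([n=\square]+[2n=\square])\equiv\OPT_m(2n)\pmod 4\). The case \(n=0\) is trivial.

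\emph{The mod \(16\) congruences.} Expand \(A(q)\) to order \(q^4\), using \(\varphi(q)=1+2q+2q^4+\cdots\) and the overpartition series \(1/\varphi(-q)=1+2q+4q^2+8q^3+14q^4+\cdots\) with \(q\mapsto q^2\). This gives
\[
A(q)=1+2q+2q^2+4q^3+6q^4+O(q^5).
\]
Raise this to the power \(m\) by the multinomial theorem. The first coefficients are \(\OPT_m(1)=2m\) and \(\OPT_m(2)=2m+4\binom m2=2m^2\). Hence
\[
\OPT_m(2)-\OPT_m(1)=2m(m-1)=4k(2k+1)=8k(k+1)-4k\equiv-4k\pmod{16},
\]
because \(k(k+1)\) is even.

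For \(\OPT_m(4)\) the multinomial expansion is a polynomial in \(m\) whose terms carry factors \(\binom mj\), multiplied by products of \(2,2,4,6\). I would reduce it mod \(16\) with \(m=2k+1\). Only terms with at most three factors of \(2\) survive, so only \(j\le 3\) matters: the contributions come from \(6q^4\), \(2q\cdot4q^3\), \((2q^2)^2\), \((2q)^2\cdot 2q^2\) and \((2q)^4\), with their multinomial weights. Subtracting \(2m^2\) should leave \(4\) modulo \(16\).

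The only real obstacle is this last bookkeeping: correctly listing the compositions of \(4\) with their multinomial counts, and checking that the resulting polynomial in \(k\) minus \(2(2k+1)^2\) is identically \(4\) mod \(16\), using that \(k(k+1)\) is even and \(\binom{m}{3}\) has a fixed parity pattern. As a safeguard I would verify the result numerically at \(m=1\) and \(m=3\) before writing the general reduction.
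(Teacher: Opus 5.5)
Your proposal is correct, and it takes a genuinely different route from the paper. The paper never expands coefficients. It uses $U(A^m)=\lambda C_m(\xi)$ and $U^2(A^m)=\alpha\lambda P(\xi)$ to write $\mathcal D_1=(\lambda-A)+\lambda\bigl(C_m(\xi)-1\bigr)+A\bigl(1-\alpha^k\bigr)$ and $\mathcal D_2=\lambda\bigl((\alpha-1)P(\xi)+(P-C_m)(\xi)\bigr)$. It gets membership in $4q\Z[[q]]$ from Lemma~\ref{lem:Cn}(ii), $\alpha\in1+4q\Z[[q]]$ and $\lambda-A\in4q^2\Z[[q]]$. It then reads off the $q^1$ coefficients modulo $16$ from leading terms such as $1-\alpha^k\equiv-k(\alpha-1)\pmod{16q^2\Z[[q]]}$ and $A(\alpha-1)=4q+16q^2+\cdots$. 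You bypass the $U$-machinery altogether. Your mod-$4$ step is in effect the first half of Lemma~\ref{lem:a-mod-2} combined with $A^m\equiv A\pmod 4$. It actually yields more than the theorem: the explicit rule $\OPT_m(n)\equiv2\bigl([n=\square]+[n/2=\square]\bigr)\pmod 4$ for all $n\ge1$ and all odd $m$. Both mod-$4$ congruences follow immediately, and the fourth is just the third applied at $2n$. What the paper's route buys is that the boundary cases come out of the same decomposition that drives the $i\ge3$ argument, and the dependence on $k$ is visibly produced by the factor $A(1-\alpha^k)$. Your mod-$16$ statements, by contrast, are a one-off finite check. That check does close as you predicted: with $m=2k+1$,
\[
\OPT_m(4)=6m+8m(m-1)+4\binom{m}{2}+24\binom{m}{3}+16\binom{m}{4}\equiv(12k+6)+0+4k(2k+1)+8k\equiv 8k(k+1)+6\equiv 6\pmod{16},
\]
using $24\binom{m}{3}=8k(4k^2-1)\equiv 8k\pmod{16}$. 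Meanwhile $\OPT_m(2)=2m^2=8k(k+1)+2\equiv 2\pmod{16}$, so the difference is $4$ modulo $16$.
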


Note that \(\OPT_m(4)-\OPT_m(2)\) occurs twice in
Theorem~\ref{thm:boundary}: it is the coefficient of \(q^2\) in the
\(i=1\) difference and the coefficient of \(q^1\) in the \(i=2\)
difference.  Combining Corollary~\ref{cor:profile} and
Theorem~\ref{thm:boundary}, we have the result.

\begin{corollary}\label{thm:main}
For every odd \(m\ge1\), every \(i\ge1\) and every \(n\ge0\),
\[
\OPT_m\bigl(2^in\bigr)\equiv\OPT_m\bigl(2^{i-1}n\bigr)
\pmod{2^{\max(2,\,i+1)}},
\]
and for each \(i\) the modulus is best possible.
\end{corollary}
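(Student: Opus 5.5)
The plan is to split according to whether \(i\ge3\), \(i=1\) or \(i=2\), since the modulus \(2^{\max(2,i+1)}\) equals \(2^{i+1}\) for \(i\ge2\) and equals \(4\) only at \(i=1\).

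\emph{The range \(i\ge3\).} Here \(\max(2,i+1)=i+1\). The congruence \(\OPT_m(2^in)\equiv\OPT_m(2^{i-1}n)\pmod{2^{i+1}}\) is the first assertion of Corollary~\ref{cor:profile} for \(n\ge1\). For \(n=0\) both sides equal \(1\). Sharpness also comes from Corollary~\ref{cor:profile}: at the odd square \(n=1\) the valuation of the difference is exactly \(i+1\), so \(2^{i+2}\) cannot replace \(2^{i+1}\).

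\emph{The case \(i=1\).} The modulus is \(4\). The congruence \(\OPT_m(2n)\equiv\OPT_m(n)\pmod 4\) is the third line of Theorem~\ref{thm:boundary}. For sharpness I would find an \(n\) where the difference is not divisible by \(8\).
\begin{itemize}
\item At \(n=1\), the first line of Theorem~\ref{thm:boundary} gives \(\OPT_m(2)-\OPT_m(1)\equiv-4k\pmod{16}\), which is \(4\pmod 8\) exactly when \(k\) is odd, that is, \(m\equiv3\pmod4\).
\item At \(n=2\), the second line gives \(\OPT_m(4)-\OPT_m(2)\equiv4\pmod{16}\), which is never \(0\pmod 8\).
\end{itemize}
So \(n=2\) witnesses sharpness for every odd \(m\).

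\emph{The case \(i=2\): the main obstacle.} As worded, the statement requires \(\OPT_m(4n)\equiv\OPT_m(2n)\pmod 8\), since \(\max(2,3)=3\). Theorem~\ref{thm:boundary} supplies only the fourth line, which is the congruence modulo \(4\). Worse, its second line evaluates the \(i=2\) difference at \(n=1\) as \(\OPT_m(4)-\OPT_m(2)\equiv4\pmod{16}\). That quantity is not divisible by \(8\), so the stated modulus \(8\) fails at \(n=1\) for every odd \(m\).

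The first thing I would do is check this numerically at \(m=1\). Using \(A(q)=f_2^3/(f_1^2f_4)\), the coefficients are \(\OPT_1(1)=2\), \(\OPT_1(2)=4\), \(\OPT_1(3)=8\), \(\OPT_1(4)=14\). This gives \(\OPT_1(4)-\OPT_1(2)=10\), which has valuation \(1\), not \(2\). So the very case \(m=1\) seems to clash even with the mod-\(16\) line of Theorem~\ref{thm:boundary}. If this holds up, no argument can establish modulus \(8\) at \(i=2\).

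The step I therefore expect to be decisive is verifying the small coefficients directly. I would expand \(A(q)^m\) to order \(q^4\) as a polynomial in \(m\):
\[
A(q)=1+2q+4q^2+8q^3+14q^4+O(q^5).
\]
Then I would reduce \(\OPT_m(4)-\OPT_m(2)\) modulo \(16\) as a function of \(k\), where \(m=2k+1\). The outcome determines how to proceed.
\begin{itemize}
\item If this difference has \(2\)-adic valuation at most \(2\) for every odd \(m\), the \(i=2\) instance is false as worded. The correct statement would then be modulus \(4\) at \(i=2\), proved by the fourth line of Theorem~\ref{thm:boundary}, with \(n=1\) showing it is best possible.
\item Only if the expansion showed \(8\mid\OPT_m(4n)-\OPT_m(2n)\) for all \(n\) would I prove it. I would do this by running the \(U_2\)-operator analysis behind Theorem~\ref{thm:profile} at \(i=2\), extracting one additional power of \(2\) from the divisibility statement that drives the iteration.
\end{itemize}
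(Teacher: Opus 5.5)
Your treatment of \(i\ge3\) (Corollary~\ref{cor:profile}, trivial at \(n=0\), sharp at \(n=1\)) is exactly the paper's argument, and so is your treatment of \(i=1\) (third line of Theorem~\ref{thm:boundary}, sharp at \(n=2\) via the second line). Your diagnosis at \(i=2\) is also right, and the paper's own proof confirms it. There the authors establish only \(\mathcal D_2\in4q\Z[[q]]\) and call that ``the case \(i=2\) of Corollary~\ref{thm:main}''. In the same proof they show \(\OPT_m(4)-\OPT_m(2)\equiv4\pmod{16}\). So the exponent \(\max(2,i+1)\) is a misprint. What is proved, and is best possible, is modulus \(4\) for \(i=1,2\) and \(2^{i+1}\) for \(i\ge3\). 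Your corrected statement at \(i=2\), proved by the fourth line of Theorem~\ref{thm:boundary} with sharpness at \(n=1\), is precisely what the paper delivers.

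You should discard the numerical check, because it is wrong.
\begin{itemize}
\item The series \(1+2q+4q^2+8q^3+14q^4+\cdots\) is \(f_2/f_1^2=1/\varphi(-q)\), the ordinary overpartition generating function, not \(A\). You must still multiply by \(\varphi(-q^2)=1-2q^2+O(q^8)\), as \eqref{eq:A-theta} requires.
\item Doing so gives \(A=1+2q+2q^2+4q^3+6q^4+\cdots\). Hence \(\OPT_1(2)-\OPT_1(1)=0\) and \(\OPT_1(4)-\OPT_1(2)=4\), in agreement with Theorem~\ref{thm:boundary} for \(k=0\).
\item For \(m=3\) the coefficients are \(1,6,18,44,102\). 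The differences are \(12\equiv-4\) and \(84\equiv4\pmod{16}\), again as predicted.
\item Your numbers would also have given \(\OPT_1(2)-\OPT_1(1)=2\). That contradicts the \(i=1\) congruence you had just invoked, and should have told you something was off.
\end{itemize}

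No ``decisive'' computation is needed, and your second contingency cannot arise. The second line of Theorem~\ref{thm:boundary} already fixes \(\nu\bigl(\OPT_m(4)-\OPT_m(2)\bigr)=2\) for every odd \(m\). Concretely, in \eqref{eq:D2-split} the \(q^1\)-coefficient of \(\lambda(\alpha-1)P(\xi)\) equals \(4\), while \(\lambda\,(P-C_m)(\xi)\in16q\Z[[q]]\). So the \(U\)-analysis has no extra power of \(2\) to give at \(i=2\).
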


Specialising to \(m=3\) recovers \cite[Eqs.~(23), (32), (38)]{SSS}.

All of this was found experimentally before it was proved.  Tabulating
\(\nu\bigl(\OPT_m(2^in)-\OPT_m(2^{i-1}n)\bigr)\) revealed first that the
answer does not depend on \(m\), and then, after subtracting \(i+1\), that
the residual function of \(n\) vanishes precisely at \(1,9,25,49,\ldots\).
The four \(U\)-operator identities on which everything rests
were likewise
first obtained from Smoot's implementation \cite{Smoot} of Radu's
algorithm \cite{Radu} and certified with Garvan's \texttt{ETA} package
\cite{Garvan2019}; the short proofs given in Section~\ref{sec:polyfamily}
were found afterwards. We mention this to highlight the role of experimentation in this area of partition theory.

The proof occupies Sections \ref{sec:preliminaries}--\ref{sec:proofs}.
Because \(A(-q)=A(q)^{-1}\), the unordered pair \(\{A,A^{-1}\}\) is stable
under \(q\mapsto-q\), and symmetric Laurent expressions in \(A\) organise
the entire computation.  We show that \(U(A^m)=\lambda\,C_m(\xi)\) for
explicit series \(\lambda,\xi\), where \(C_n\) is the family of integer
polynomials determined by
\[
X^{n}+X^{-n}=\bigl(X+X^{-1}\bigr)\,C_n\!\left(\frac{X^{2}+X^{-2}}{2}\right),
\]
and that every auxiliary polynomial arising in the iteration is an integer linear combination of members of this family.  The single divisibility statement
\(C_n(x)-1\in2(x-1)\Z[x]\) then yields one power of \(2\) per application
of \(U\).  Section~\ref{sec:even} records what happens for even \(m\),
where the growth rate is three times as fast, and we give a conjecture for this case.

\section{Preliminaries}\label{sec:preliminaries}

\subsection{Theta functions}

We use Ramanujan's theta functions
\[
\varphi(q):=\sum_{n=-\infty}^{\infty}q^{n^2},
\qquad
\psi(q):=\sum_{n\ge0}q^{n(n+1)/2},
\qquad |q|<1 .
\]
Jacobi's triple product \cite[Theorem~0.1]{Cooper} gives the product forms
\begin{equation}
\varphi(q)=\frac{f_2^5}{f_1^2f_4^2},
\qquad
\varphi(-q)=\frac{f_1^2}{f_2},
\qquad
\psi(q)=\frac{f_2^2}{f_1}.
\label{eq:theta-products}
\end{equation}
We shall use the following four classical identities, all recorded in
\cite[\S3]{Cooper}:
\begin{align}
\varphi(q)&=\varphi\bigl(q^4\bigr)+2q\,\psi\bigl(q^8\bigr),
\label{eq:phi-dissection}\\
\varphi(q)^2+\varphi(-q)^2&=2\varphi\bigl(q^2\bigr)^2,
\label{eq:phi-square-sum}\\
\varphi(q)\varphi(-q)&=\varphi\bigl(-q^2\bigr)^2,
\label{eq:phi-product}\\
\varphi(-q)&=1+2\sum_{n\ge1}(-1)^nq^{n^2}.
\label{eq:phi-minus-series}
\end{align}
Adding and subtracting \eqref{eq:phi-dissection} and its image under
\(q\mapsto-q\) gives two consequences we use:
\begin{equation}
\varphi(q)+\varphi(-q)=2\varphi\bigl(q^4\bigr),
\qquad
\varphi(q)-\varphi(-q)=4q\,\psi\bigl(q^8\bigr).
\label{eq:phi-sum-difference}
\end{equation}

\subsection{The unitizing operator}

For \(H(q)=\sum\limits_{n\ge0}c(n)q^n\) put
\begin{equation}
U(H)(q):=\sum_{n\ge0}c(2n)q^n,
\label{eq:def-U}
\end{equation}
so that \(U^i\) extracts the coefficients indexed by multiples of \(2^i\).
Equivalently,
\begin{equation}
U(H)\bigl(q^2\bigr)=\frac{H(q)+H(-q)}{2}.
\label{eq:U-even-part}
\end{equation}
In particular \(U\) is \(\Z\)-linear on \(\Z[[q]]\).  By
\eqref{eq:OPTm-generating-function},
\[
U^i\bigl(A^m\bigr)(q)=\sum_{n\ge0}\OPT_m\bigl(2^in\bigr)q^n .
\]

\subsection{The auxiliary series}

Set
\begin{equation}
\alpha:=A^2,\qquad \xi:=A^4,\qquad
\lambda:=\frac{\varphi(q^2)}{\varphi(-q)}=\frac{f_4^5}{f_1^2f_2f_8^2},
\qquad
a:=\frac{A-1}{2}.
\label{eq:def-auxiliary}
\end{equation}

\begin{lemma}\label{lem:A-basic}
We have
\begin{align}
A(-q)&=A(q)^{-1},
\label{eq:A-inverse}\\
A&=\frac{\varphi(-q^2)}{\varphi(-q)},
\label{eq:A-theta}\\
A(q)+A(-q)&=2\lambda\bigl(q^2\bigr),
\label{eq:A-plus-Ainv}\\
\alpha+\alpha^{-1}&=2\xi\bigl(q^2\bigr),
\label{eq:alpha-plus-inv}\\
\xi&=2\lambda^2-1 .
\label{eq:xi-lambda}
\end{align}
Moreover \(a\in q\Z[[q]]\) and
\begin{equation}
\alpha-1=4\bigl(a+a^2\bigr),
\qquad
\xi-1=8\,(a+a^2)\bigl(1+2a+2a^2\bigr).
\label{eq:alpha-xi-minus-one}
\end{equation}
In particular
\begin{equation}
\alpha\in1+4q\Z[[q]],
\qquad
\xi\in1+8q\Z[[q]],
\qquad
\lambda\in1+2q\Z[[q]] .
\label{eq:congruence-shapes}
\end{equation}
\end{lemma}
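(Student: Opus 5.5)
The plan is to reduce everything to the theta-function identities of \S2.1, using the product forms \eqref{eq:theta-products}.

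\emph{Step 1: \eqref{eq:A-theta} and \eqref{eq:A-inverse}.} From \eqref{eq:theta-products}, \(\varphi(-q^2)=f_2^2/f_4\) and \(\varphi(-q)=f_1^2/f_2\). Hence
\[
\frac{\varphi(-q^2)}{\varphi(-q)}=\frac{f_2^3}{f_1^2f_4}=A,
\]
which is \eqref{eq:A-theta}. Replacing \(q\) by \(-q\) gives \(A(-q)=\varphi(-q^2)/\varphi(q)\). Then
\[
A(q)A(-q)=\frac{\varphi(-q^2)^2}{\varphi(q)\varphi(-q)}=1
\]
by \eqref{eq:phi-product}, which proves \eqref{eq:A-inverse}.

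\emph{Step 2: \eqref{eq:A-plus-Ainv}.} We write
\[
A(q)+A(-q)=\varphi(-q^2)\,\frac{\varphi(q)+\varphi(-q)}{\varphi(q)\varphi(-q)}.
\]
By \eqref{eq:phi-sum-difference} the numerator equals \(2\varphi(q^4)\), and by \eqref{eq:phi-product} the denominator is \(\varphi(-q^2)^2\). So the sum is \(2\varphi(q^4)/\varphi(-q^2)=2\lambda(q^2)\). The eta-quotient form of \(\lambda\) follows from \eqref{eq:theta-products} with \(q\mapsto q^2\):
\[
\varphi(q^2)=\frac{f_4^5}{f_2^2f_8^2},
\]
and dividing by \(f_1^2/f_2\) gives the stated quotient.

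\emph{Step 3: \eqref{eq:alpha-plus-inv} and \eqref{eq:xi-lambda}.} For \eqref{eq:alpha-plus-inv}, square the previous identity: \(\alpha+\alpha^{-1}=(A+A^{-1})^2-2=4\lambda(q^2)^2-2\). This reduces \eqref{eq:alpha-plus-inv} to \eqref{eq:xi-lambda} evaluated at \(q^2\), so it suffices to prove \(\xi=2\lambda^2-1\), i.e.
\[
A^4+1=\frac{2\varphi(q^2)^2}{\varphi(-q)^2}.
\]
Multiplying by \(\varphi(-q)^4\) and using \(A\varphi(-q)=\varphi(-q^2)\), this becomes
\[
\varphi(-q^2)^4+\varphi(-q)^4=2\varphi(q^2)^2\varphi(-q)^2.
\]
By \eqref{eq:phi-product}, \(\varphi(-q^2)^4=\varphi(q)^2\varphi(-q)^2\). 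Dividing by \(\varphi(-q)^2\) leaves \(\varphi(q)^2+\varphi(-q)^2=2\varphi(q^2)^2\), which is exactly \eqref{eq:phi-square-sum}. This step is the only one where one must notice which identity to use, and it is the main point of the lemma.

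\emph{Step 4: the integrality statements.} To show \(a\in q\Z[[q]]\), it suffices that \(A\in1+2q\Z[[q]]\). The Euler-product form \(\prod_{j\text{ odd}}(1+q^j)/(1-q^j)\) gives this directly, since \((1+q^j)/(1-q^j)=1+2q^j/(1-q^j)\).

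With \(A=1+2a\) we get \(\alpha-1=4a+4a^2\). Then
\[
\xi-1=(\alpha-1)(\alpha+1)=4(a+a^2)\bigl(2+4a+4a^2\bigr),
\]
which is the stated formula. The shapes in \eqref{eq:congruence-shapes} for \(\alpha\) and \(\xi\) follow immediately from these formulas.

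For \(\lambda\), note that \(\lambda=\varphi(q^2)/\varphi(-q)\), where \(\varphi(q^2)\in1+2q\Z[[q]]\) and \(\varphi(-q)\in1+2q\Z[[q]]\) by \eqref{eq:phi-minus-series}. The inverse of a series in \(1+2q\Z[[q]]\) again lies in \(1+2q\Z[[q]]\), since this set is a multiplicative group. Hence \(\lambda\in1+2q\Z[[q]]\).
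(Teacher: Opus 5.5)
Your proof is correct and follows essentially the same route as the paper. Everything reduces to the product forms \eqref{eq:theta-products} and the identities \eqref{eq:phi-square-sum}, \eqref{eq:phi-product}, \eqref{eq:phi-sum-difference}, and the integrality part is identical. The differences are only in ordering and are minor:
\begin{itemize}
\item You check \eqref{eq:A-theta} directly from the eta-quotients. This is slightly cleaner than the paper's passage through \(A^2=\varphi(-q^2)^2/\varphi(-q)^2\), where a square root with constant term \(1\) is tacitly taken.
\item You deduce \eqref{eq:A-inverse} from \eqref{eq:A-theta} and \eqref{eq:phi-product}, whereas the paper gets it from the Euler product.
\item You obtain \eqref{eq:alpha-plus-inv} by squaring \eqref{eq:A-plus-Ainv} and invoking \eqref{eq:xi-lambda} at \(q^2\), whereas the paper derives \eqref{eq:alpha-plus-inv} directly from \eqref{eq:phi-square-sum} and \eqref{eq:phi-product}.
\end{itemize}
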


\begin{proof}
From \eqref{eq:OPTm-generating-function},
\(A=\prod_{j\ \mathrm{odd}}(1+q^{j})/(1-q^{j})\), which gives
\eqref{eq:A-inverse} and \(A\in1+2q\Z[[q]]\), hence \(a\in q\Z[[q]]\).

By \eqref{eq:theta-products},
\(\alpha=A^2=f_2^6/(f_1^4f_4^2)=\varphi(q)/\varphi(-q)\).  Combining with
\eqref{eq:phi-product} gives
\(A^2=\varphi(-q^2)^2/\varphi(-q)^2\) and hence \eqref{eq:A-theta}
follows.

For \eqref{eq:A-plus-Ainv}, use \eqref{eq:A-inverse},
\eqref{eq:A-theta}, \eqref{eq:phi-product} and
\eqref{eq:phi-sum-difference}:
\begin{align*}
   A(q)+A(-q)
=\frac{\varphi(-q^2)^2+\varphi(-q)^2}{\varphi(-q)\varphi(-q^2)}
=\frac{\varphi(q)\varphi(-q)+\varphi(-q)^2}{\varphi(-q)\varphi(-q^2)}
=\frac{\varphi(q)+\varphi(-q)}{\varphi(-q^2)}
&=\frac{2\varphi(q^4)}{\varphi(-q^2)}\\
&=2\lambda\bigl(q^2\bigr). 
\end{align*}

Similarly \eqref{eq:phi-square-sum} and \eqref{eq:phi-product} give
\[
\alpha+\alpha^{-1}
=\frac{\varphi(q)^2+\varphi(-q)^2}{\varphi(q)\varphi(-q)}
=\frac{2\varphi(q^2)^2}{\varphi(-q^2)^2}
=2\xi\bigl(q^2\bigr),
\]
since \(\xi=\alpha^2=\bigl(\varphi(q)/\varphi(-q)\bigr)^2\), which gives us \eqref{eq:alpha-plus-inv}.  Dividing
\eqref{eq:phi-square-sum} by \(\varphi(-q)^2\) gives
\(\xi+1=2\lambda^2\), which is \eqref{eq:xi-lambda}.

Finally \(A=1+2a\) gives \(\alpha=1+4a+4a^2\), and
\(\xi-1=(\alpha-1)(\alpha+1)=4(a+a^2)\cdot\bigl(2+4a+4a^2\bigr)\), which
is \eqref{eq:alpha-xi-minus-one}.  

The first two statements of
\eqref{eq:congruence-shapes} now follow immediately. For the third, the theta-series
expansions give
\[
\varphi(q^2)\equiv\varphi(-q)\equiv1\pmod{2\Z[[q]]},
\]
which imply
\(\lambda=\varphi(q^2)/\varphi(-q)\in1+2q\Z[[q]]\).
\end{proof}

We close this section with an important lemma used in the proof of Theorem~\ref{thm:profile}.

\begin{lemma}\label{lem:a-mod-2}
We have
\[
a\equiv\sum_{n\ge1}\Bigl(q^{n^2}+q^{2n^2}\Bigr)\pmod 2,
\qquad\text{and}\qquad
a+a^2\equiv\sum_{k\ge0}q^{(2k+1)^2}\pmod 2 .
\]
\end{lemma}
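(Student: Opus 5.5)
We work modulo \(2\) throughout and start from \eqref{eq:A-theta}, \(A=\varphi(-q^2)/\varphi(-q)\), together with \(A=1+2a\). Clearing the denominator gives
\[
\varphi(-q)(1+2a)=\varphi(-q^2).
\]
By \eqref{eq:phi-minus-series} we have \(\varphi(-q)=1+2S_1\) and \(\varphi(-q^2)=1+2S_2\), where
\[
S_1:=\sum_{n\ge1}(-1)^nq^{n^2},\qquad S_2:=\sum_{n\ge1}(-1)^nq^{2n^2}.
\]
Substituting, expanding and cancelling the constant term \(1\) gives
\[
2S_1+2a+4aS_1=2S_2 .
\]
Dividing by \(2\) (legitimate in \(\Z[[q]]\)) yields the exact identity
\[
a=S_2-S_1-2aS_1 .
\]
Reducing modulo \(2\), the term \(2aS_1\) vanishes and the signs \((-1)^n\) are irrelevant. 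Hence
\[
a\equiv\sum_{n\ge1}\bigl(q^{n^2}+q^{2n^2}\bigr)\pmod 2,
\]
which is the first claim.

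For the second claim we use the Frobenius congruence \(a^2\equiv a(q^2)\pmod 2\), valid for any \(a\in\Z[[q]]\). Applied to the first claim it gives
\[
a^2\equiv\sum_{n\ge1}\bigl(q^{2n^2}+q^{4n^2}\bigr)\pmod 2 .
\]
Adding this to the expression for \(a\), the two copies of \(\sum_{n\ge1}q^{2n^2}\) cancel, leaving
\[
a+a^2\equiv\sum_{n\ge1}q^{n^2}+\sum_{n\ge1}q^{4n^2}\pmod 2 .
\]
The second sum is exactly the part of the first sum indexed by even \(n\), since \(q^{(2n)^2}=q^{4n^2}\). So these terms cancel modulo \(2\), and what remains is \(\sum_{n\ \mathrm{odd}}q^{n^2}=\sum_{k\ge0}q^{(2k+1)^2}\). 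This proves the second claim.

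No step here is a real obstacle. The only point needing care is the division by \(2\) in the first step: it is valid because both sides are honest power series with integer coefficients, so we keep the exact identity and reduce only afterwards. The rest is bookkeeping of the supports \(n^2\), \(2n^2\) and \(4n^2\).
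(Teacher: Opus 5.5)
Your proof is correct and is essentially the paper's argument. The paper writes \(a=(S_2-S_1)/\varphi(-q)\) and reduces modulo \(2\) using \(\varphi(-q)^{-1}\in\Z[[q]]\) with \(\varphi(-q)\equiv1\pmod 2\); you instead clear the denominator to get the exact identity \(a\,\varphi(-q)=S_2-S_1\) before reducing, which is the same step in a slightly different form, and your second part (via \(a^2\equiv a(q^2)\pmod 2\), which the paper uses implicitly) is exactly the paper's computation.
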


\begin{proof}
By \eqref{eq:A-theta} and \eqref{eq:phi-minus-series},
\[
a=\frac{A-1}{2}=\frac{\varphi(-q^2)-\varphi(-q)}{2\varphi(-q)}
=\frac{\sum_{n\ge1}(-1)^n\bigl(q^{2n^2}-q^{n^2}\bigr)}{\varphi(-q)}.
\]
Since \(\varphi(-q)\equiv1\pmod2\) by \eqref{eq:phi-minus-series}, and
\(\varphi(-q)^{-1}\in\Z[[q]]\), reducing modulo \(2\) gives the first
assertion.

For the second, notice
\[
a+a^2
\equiv
\sum_{n\ge1}\Bigl(q^{n^2}+q^{2n^2}\Bigr)
+\sum_{n\ge1}\Bigl(q^{2n^2}+q^{4n^2}\Bigr)
\equiv
\sum_{n\ge1}q^{n^2}+\sum_{n\ge1}q^{(2n)^2}
\equiv
\sum_{\substack{n\ge1\\ n\ \mathrm{odd}}}q^{n^2}
\pmod 2 . \qedhere
\]
\end{proof}

\section{A polynomial family and the initial \texorpdfstring{$U$}{U}-identities}
\label{sec:polyfamily}

\subsection{The polynomials \texorpdfstring{$C_n$}{Cn}}

Everything below rests on the single relation \eqref{eq:A-inverse}, which
says that the unordered pair \(\{A,A^{-1}\}\) is stable under
\(q\mapsto-q\).  Symmetric Laurent expressions in \(A\) are therefore the
natural objects, and the following integer polynomials record them.

\begin{definition}\label{def:Cn}
For odd \(n\ge1\) let \(C_n\in\Z[x]\) be defined by
\begin{equation}
C_1:=1,
\qquad
C_3:=2x-1,
\qquad
C_{n+2}:=2x\,C_n-C_{n-2}
\quad(n\ge3).
\label{eq:Cn-recurrence}
\end{equation}
\end{definition}

Thus \(C_n\) has degree \(\tfrac{n-1}{2}\) and leading coefficient
\(2^{(n-1)/2}\); explicitly
\[
C_1=1,\qquad
C_3=2x-1,\qquad
C_5=4x^2-2x-1,\qquad
C_7=8x^3-4x^2-4x+1,
\]
\[
C_9=16x^4-8x^3-12x^2+4x+1,
\qquad
C_{11}=32x^5-16x^4-32x^3+12x^2+6x-1 .
\]

The point of the definition is the following identity, which is the only
property of \(C_n\) used to produce \(U\)-identities.

\begin{lemma}\label{lem:Cn-generating}
Let \(X\) be an indeterminate.  For every odd \(n\ge1\) one has, in
\(\Q\bigl[X,X^{-1}\bigr]\),
\begin{equation}
X^{n}+X^{-n}
=\bigl(X+X^{-1}\bigr)\,C_n\!\left(\frac{X^{2}+X^{-2}}{2}\right).
\label{eq:Cn-generating}
\end{equation}
\end{lemma}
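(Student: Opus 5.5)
We prove \eqref{eq:Cn-generating} by induction on odd \(n\), in steps of \(2\), using the recurrence \eqref{eq:Cn-recurrence}. Put \(S_n:=X^n+X^{-n}\) and \(x:=\tfrac{X^2+X^{-2}}{2}\), so that \(2x=S_2\). The claim reads \(S_n=S_1\,C_n(x)\).

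The base cases are direct checks. For \(n=1\) both sides equal \(X+X^{-1}\), since \(C_1=1\). For \(n=3\) we need \((X+X^{-1})(X^2+X^{-2}-1)=X^3+X^{-3}\). Expanding the left side gives \(X^3+X^{-1}-X+X+X^{-3}-X^{-1}\), which is \(X^3+X^{-3}\).

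For the inductive step we use the elementary identity
\[
S_2\,S_n=\bigl(X^2+X^{-2}\bigr)\bigl(X^n+X^{-n}\bigr)=X^{n+2}+X^{-(n+2)}+X^{n-2}+X^{-(n-2)}=S_{n+2}+S_{n-2}.
\]
This holds for every integer \(n\), and for \(n\ge3\) the index \(n-2\) is a positive odd number. Assume the claim for \(n\) and \(n-2\). Then
\[
S_{n+2}=S_2S_n-S_{n-2}=2x\,S_1C_n(x)-S_1C_{n-2}(x)=S_1\bigl(2xC_n(x)-C_{n-2}(x)\bigr)=S_1\,C_{n+2}(x),
\]
which is the claim for \(n+2\). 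Since the two base cases \(n=1,3\) are in hand, strong induction covers all odd \(n\ge1\).

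There is no real obstacle. The only point needing care is that the recurrence \(S_{n+2}=S_2S_n-S_{n-2}\) has exactly the shape of \eqref{eq:Cn-recurrence}, with \(2x\) playing the role of \(S_2\). Everything takes place in the ring \(\Q[X,X^{-1}]\), which contains \(x\) because \(\tfrac12\in\Q\), so substituting \(x\) into \(C_n\in\Z[x]\) is legitimate. Alternatively, one could note that \(C_n(\cos 2\theta)=\cos n\theta/\cos\theta\) and deduce the identity from Chebyshev polynomials, but the induction above is shorter and self-contained.
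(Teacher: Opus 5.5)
Your proof is correct and follows the paper's argument exactly. Both you and the paper check $n=1,3$ directly, then note that $X^n+X^{-n}$ and $(X+X^{-1})C_n(x)$ satisfy the same recurrence $L_{n+2}=2x\,L_n-L_{n-2}$, which comes from $(X^2+X^{-2})L_n=L_{n+2}+L_{n-2}$; you simply write out the inductive step more explicitly than the paper does.
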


\begin{proof}
Put \(x:=\tfrac12\bigl(X^2+X^{-2}\bigr)\) and \(L_n:=X^{n}+X^{-n}\).  

For
\(n=1\) the assertion is trivial.

For \(n=3\) it reads
\(X^3+X^{-3}=\bigl(X+X^{-1}\bigr)\bigl(X^2-1+X^{-2}\bigr)\), which is true. Here
\(X^2-1+X^{-2}=2x-1=C_3(x)\).

Observe that
\begin{equation}
\bigl(X^{2}+X^{-2}\bigr)L_n=L_{n+2}+L_{n-2},
\qquad\text{that is,}\qquad
L_{n+2}=2x\,L_n-L_{n-2},
\label{eq:L-recurrence}
\end{equation}
so \(L_n\) and \(\bigl(X+X^{-1}\bigr)C_n(x)\) satisfy the same recurrence
\eqref{eq:Cn-recurrence} and agree for \(n=1,3\). Hence the result follows.
\end{proof}

\begin{remark}\label{rem:classical}
Substituting \(X=e^{i\theta}\) in \eqref{eq:Cn-generating} gives
\(C_n(\cos2\theta)=\cos n\theta/\cos\theta\), so that \(C_{2k+1}\) is the
\(k\)-th Chebyshev polynomial of the third kind; equivalently
\(\Cheb_n(y)=y\,C_n\bigl(2y^2-1\bigr)\) for odd \(n\), where \(\Cheb_n\)
is the Chebyshev polynomial of the first kind.  We do not use these facts.
\end{remark}

\begin{lemma}\label{lem:Cn}
For every odd \(n\ge1\):
\begin{enumerate}
\item[\textup{(i)}] \(C_n(1)=1\);
\item[\textup{(ii)}] \(C_n\equiv1\pmod{2\Z[x]}\), and consequently
      \(C_n(x)-1\in 2(x-1)\Z[x]\);
\item[\textup{(iii)}] \(C_n'(1)=\dfrac{n^2-1}{4}\).
\end{enumerate}
\end{lemma}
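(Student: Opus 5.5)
All three parts will follow by induction on odd \(n\) from the recurrence \eqref{eq:Cn-recurrence}. The base cases are \(C_1=1\) and \(C_3=2x-1\). Each property is preserved by the step \(C_{n+2}=2xC_n-C_{n-2}\), since it is linear in the two previous terms.

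For (i), I evaluate the recurrence at \(x=1\). This gives \(C_{n+2}(1)=2C_n(1)-C_{n-2}(1)\), with \(C_1(1)=C_3(1)=1\), so the sequence is constantly \(1\). Alternatively, put \(X=1\) in \eqref{eq:Cn-generating}: this gives \(2=2C_n(1)\).

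For (ii), I reduce the recurrence modulo \(2\). This gives \(C_{n+2}\equiv -C_{n-2}\equiv C_{n-2}\pmod{2\Z[x]}\). The base cases satisfy \(C_1\equiv C_3\equiv 1\), so by induction \(C_n\equiv 1\pmod{2\Z[x]}\) for all odd \(n\). Hence \(C_n-1=2P_n\) for some \(P_n\in\Z[x]\). By (i), \(2P_n(1)=C_n(1)-1=0\), so \(P_n(1)=0\). Since \(x-1\) is monic, division in \(\Z[x]\) gives \(P_n\in(x-1)\Z[x]\), and therefore \(C_n-1\in 2(x-1)\Z[x]\). The only point needing care is that the factor \(x-1\) comes out with integer cofactor, and monicity handles this.

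For (iii), I differentiate the recurrence:
\[
C_{n+2}'=2C_n+2xC_n'-C_{n-2}'.
\]
At \(x=1\), using (i), this becomes \(d_{n+2}=2+2d_n-d_{n-2}\), where \(d_n:=C_n'(1)\). The base values are \(d_1=0\) and \(d_3=2\). The candidate \(d_n=(n^2-1)/4\) matches both: \(0\) and \(8/4=2\). It remains to check that it satisfies the recurrence, i.e.
\[
2+\frac{2(n^2-1)-((n-2)^2-1)}{4}=\frac{(n+2)^2-1}{4}.
\]
The left side equals \(2+(n^2+4n-5)/4=(n^2+4n+3)/4\), which is the right side. Induction then finishes.

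Alternatively, (iii) follows from Remark~\ref{rem:classical} by expanding \(\cos n\theta/\cos\theta\) to second order in \(\theta\). I will use the recurrence route, since it is self-contained. None of the steps is a real obstacle; the most delicate point is the integrality of the factorization in (ii).
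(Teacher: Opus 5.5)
Your proof is correct and follows the paper's argument step for step. In (ii) you reduce the recurrence modulo $2$ along the two residue classes and extract the factor $x-1$ from $C_n-1=2G_n$ using $C_n(1)=1$; in (iii) you check the derivative recurrence $d_{n+2}=2+2d_n-d_{n-2}$ against $(n^2-1)/4$. The paper proves (i) by setting $X=1$ in \eqref{eq:Cn-generating}, which you give as your alternative, and your remark that $x-1$ is monic makes explicit an integrality step the paper leaves implicit.
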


\begin{proof}
(i) Specializing \eqref{eq:Cn-generating} at \(X=1\) gives us the result.

(ii) Reducing \eqref{eq:Cn-recurrence} modulo \(2\) gives
\(C_{n+2}\equiv C_{n-2}\), and \(C_1\equiv C_3\equiv1\pmod2\); induction
along the two residue classes \(n\equiv1,3\pmod4\) yields
\(C_n\equiv1\pmod{2\Z[x]}\).  

Write \(C_n=1+2G_n\) with \(G_n\in\Z[x]\).
By (i) we have \(G_n(1)=0\), so \(G_n\in(x-1)\Z[x]\) and
\(C_n-1\in2(x-1)\Z[x]\).

(iii) Put \(d_n:=C_n'(1)\).  Differentiating \eqref{eq:Cn-recurrence} at
\(x=1\) and using (i) gives \(d_{n+2}=2+2d_n-d_{n-2}\), with \(d_1=0\) and
\(d_3=2\).  

The sequence \((n^2-1)/4\) has the same initial values and
satisfies the same recurrence, since
\[
2+2\cdot\frac{n^2-1}{4}-\frac{(n-2)^2-1}{4}
=\frac{n^2+4n+3}{4}
=\frac{(n+2)^2-1}{4}. 
\] This proves the result.
\end{proof}

\subsection{The \texorpdfstring{$U$}{U}-identities}

We use the notations from Section \ref{sec:preliminaries}.

\begin{proposition}\label{prop:U-Am}
For every odd \(m\ge1\),
\begin{equation}
U\bigl(A^m\bigr)=\lambda\,C_m(\xi).
\label{eq:U-Am}
\end{equation}
In particular \(U(A)=\lambda\) and \(U(A^3)=\lambda\bigl(2\xi-1\bigr)\).
\end{proposition}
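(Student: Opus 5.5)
We use \eqref{eq:U-even-part} and \eqref{eq:A-inverse} to write
\[
U\bigl(A^m\bigr)\bigl(q^2\bigr)=\frac{A(q)^m+A(-q)^m}{2}=\frac{A^m+A^{-m}}{2}.
\]
We then apply Lemma~\ref{lem:Cn-generating} with \(X=A\). This is legitimate because \(A\in1+2q\Z[[q]]\) is a unit in \(\Z[[q]]\), so the Laurent identity \eqref{eq:Cn-generating} specialises to an identity in \(\Q[[q]]\). The specialisation gives
\[
\frac{A^m+A^{-m}}{2}=\frac{A+A^{-1}}{2}\,C_m\!\left(\frac{A^2+A^{-2}}{2}\right).
\]

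Next we identify each factor as a series in \(q^2\). By \eqref{eq:A-plus-Ainv} together with \eqref{eq:A-inverse}, \(\tfrac12(A+A^{-1})=\lambda(q^2)\). By \eqref{eq:alpha-plus-inv} with \(\alpha=A^2\), \(\tfrac12(A^2+A^{-2})=\xi(q^2)\). Since \(C_m\) is a polynomial, \(C_m\bigl(\xi(q^2)\bigr)=\bigl(C_m(\xi)\bigr)(q^2)\). Hence
\[
U\bigl(A^m\bigr)\bigl(q^2\bigr)=\bigl(\lambda\,C_m(\xi)\bigr)\bigl(q^2\bigr).
\]
Replacing \(q^2\) by \(q\) is justified because both sides are power series in \(q^2\), and a power series in \(q^2\) determines its coefficients uniquely. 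This gives \eqref{eq:U-Am}. The special cases follow from \(C_1=1\) and \(C_3=2x-1\).

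The only step needing care is that the relations \eqref{eq:A-plus-Ainv} and \eqref{eq:alpha-plus-inv} from Lemma~\ref{lem:A-basic} express the symmetric sums exactly as \(q^2\)-dilations of \(\lambda\) and \(\xi\). That lemma already supplies both relations, so the proof is a direct substitution.
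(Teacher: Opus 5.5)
Your proof is correct and matches the paper's argument essentially step for step. You specialise \eqref{eq:Cn-generating} at \(X=A\), identify \(\tfrac12(A+A^{-1})\) and \(\tfrac12(A^2+A^{-2})\) with \(\lambda(q^2)\) and \(\xi(q^2)\) via \eqref{eq:A-plus-Ainv} and \eqref{eq:alpha-plus-inv}, and compare with \(2U(A^m)(q^2)=A^m+A^{-m}\) from \eqref{eq:U-even-part} and \eqref{eq:A-inverse}, which is exactly what the paper does.
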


\begin{proof}
Since \(A\in1+2q\Z[[q]]\) is a unit of \(\Z[[q]]\), the assignment
\(X\mapsto A(q)\) defines a ring homomorphism
\(\Q\bigl[X,X^{-1}\bigr]\to\Q[[q]]\).  Under it, 
\eqref{eq:A-plus-Ainv} and \eqref{eq:alpha-plus-inv} reads as
\[
X+X^{-1}=A(q)+A(-q)=2\lambda\bigl(q^2\bigr),
\qquad
\frac{X^{2}+X^{-2}}{2}=\frac{\alpha+\alpha^{-1}}{2}=\xi\bigl(q^2\bigr),
\]
so that \eqref{eq:Cn-generating} becomes
\begin{equation}\label{eq:a}
    A(q)^m+A(q)^{-m}
=2\lambda\bigl(q^2\bigr)\,C_m\bigl(\xi(q^2)\bigr).
\end{equation}
On the other hand \(A(-q)=A(q)^{-1}\) by \eqref{eq:A-inverse}, so
\eqref{eq:U-even-part} gives
\begin{equation}\label{eq:b}
 2\,U\bigl(A^m\bigr)\bigl(q^2\bigr)=A(q)^m+A(q)^{-m}.   
\end{equation}
Comparing \eqref{eq:a} and \eqref{eq:b} and
replacing \(q^2\) by \(q\) yields \eqref{eq:U-Am}.  The two special cases
now follow from the fact that \(C_1=1\) and \(C_3=2x-1\).
\end{proof}

We isolate one further identity, which we shall need as a normalisation.

\begin{lemma}\label{lem:U-lambda}
\(U(\lambda)=\alpha\lambda\).
\end{lemma}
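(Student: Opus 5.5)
The plan is to compute \(U(\lambda)\) directly from the characterisation \eqref{eq:U-even-part}, namely \(U(\lambda)(q^2)=\tfrac12\bigl(\lambda(q)+\lambda(-q)\bigr)\), using only the theta-function form \(\lambda=\varphi(q^2)/\varphi(-q)\) from \eqref{eq:def-auxiliary}. This is the same device as in the proof of Proposition~\ref{prop:U-Am}: sum the two sign-twisted copies, then simplify with the classical identities.

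First, since \(\varphi(q^2)\) is invariant under \(q\mapsto-q\), we have \(\lambda(-q)=\varphi(q^2)/\varphi(q)\). Putting the two terms over a common denominator gives
\[
\lambda(q)+\lambda(-q)=\frac{\varphi(q^2)\bigl(\varphi(q)+\varphi(-q)\bigr)}{\varphi(q)\varphi(-q)} .
\]
We apply \eqref{eq:phi-sum-difference} to the numerator, which replaces \(\varphi(q)+\varphi(-q)\) by \(2\varphi(q^4)\). We apply \eqref{eq:phi-product} to the denominator, which replaces \(\varphi(q)\varphi(-q)\) by \(\varphi(-q^2)^2\). Hence
\[
U(\lambda)\bigl(q^2\bigr)=\frac{\varphi(q^2)\varphi(q^4)}{\varphi(-q^2)^2}.
\]
Every series on the right is now a series in \(q^2\), so replacing \(q^2\) by \(q\) gives
\[
U(\lambda)=\frac{\varphi(q)\varphi(q^2)}{\varphi(-q)^2}.
\]

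Finally, we recall from the proof of Lemma~\ref{lem:A-basic} that \(\alpha=A^2=\varphi(q)/\varphi(-q)\). Multiplying this by \(\lambda=\varphi(q^2)/\varphi(-q)\) gives exactly \(\varphi(q)\varphi(q^2)/\varphi(-q)^2\), which matches the expression for \(U(\lambda)\) and proves the lemma.

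There is no real obstacle here. The only point needing care is the legitimacy of the substitution \(q^2\mapsto q\): it is valid because after the two simplifications every factor is visibly a power series in \(q^2\).
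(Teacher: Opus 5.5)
Your proposal is correct and follows the paper's proof essentially line for line. Both arguments write \(\lambda(q)+\lambda(-q)\) over the common denominator \(\varphi(q)\varphi(-q)\), simplify with \eqref{eq:phi-sum-difference} and \eqref{eq:phi-product}, and recognise \(\varphi(q^2)\varphi(q^4)/\varphi(-q^2)^2\) as \(\alpha(q^2)\lambda(q^2)\).
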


\begin{proof}
By \eqref{eq:def-auxiliary} and \eqref{eq:theta-products},
\(\alpha=\varphi(q)/\varphi(-q)\) and 
\(\lambda(-q)=\varphi(q^2)/\varphi(q)=\lambda(q)/\alpha(q)\).  Hence, by
\eqref{eq:U-even-part}, \eqref{eq:phi-sum-difference},
\eqref{eq:phi-product} and  \eqref{eq:def-auxiliary}, we have
\[
2\,U(\lambda)\bigl(q^2\bigr)
=\lambda(q)+\lambda(-q)
=\varphi\bigl(q^2\bigr)\,\frac{\varphi(q)+\varphi(-q)}{\varphi(q)\varphi(-q)}
=\frac{2\varphi(q^2)\varphi(q^4)}{\varphi(-q^2)^2}
=2\,\alpha\bigl(q^2\bigr)\lambda\bigl(q^2\bigr).
\]  Replacing \(q^2\) by \(q\)
completes the proof.
\end{proof}

Next we compute \(U\) on \(\lambda\xi^{\,j}\) and \(\alpha\lambda\xi^{\,j}\).
Fix a formal variable \(s\) with \(s^2=q\) and put
\begin{equation}
X:=A(s),
\qquad\text{so that}\qquad
\mu:=\alpha(s)=X^{2}.
\label{eq:def-X-s}
\end{equation}
By \eqref{eq:def-auxiliary}, \eqref{eq:A-inverse} and
\eqref{eq:alpha-plus-inv}, it is easy to see that
\begin{equation}
\alpha(-s)=X^{-2},
\qquad
\xi(\pm s)=X^{\pm4},
\qquad
\lambda(-s)=\lambda(s)\,X^{-2},
\qquad
\frac{X^{2}+X^{-2}}{2}=\xi(q);
\label{eq:mu-dictionary}
\end{equation}
here the
fourth identity is \(\mu+\mu^{-1}=2\,\xi(s^2)=2\xi(q)\).

\begin{lemma}\label{lem:U-identities}
For every \(j\ge0\),
\begin{equation}
U\bigl(\lambda\,\xi^{\,j}\bigr)=\alpha\lambda\,C_{4j+1}(\xi),
\qquad
U\bigl(\alpha\lambda\,\xi^{\,j}\bigr)=\alpha\lambda\,C_{4j+3}(\xi).
\label{eq:U-C-identities}
\end{equation}
\end{lemma}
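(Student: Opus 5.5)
We argue exactly as in the proof of Proposition~\ref{prop:U-Am}, working with \(s\) and \(X=A(s)\) as in \eqref{eq:def-X-s}. By \eqref{eq:U-even-part}, for any series \(H\) we have \(2\,U(H)(q)=H(s)+H(-s)\). We apply this to \(H=\lambda\xi^{\,j}\) and \(H=\alpha\lambda\xi^{\,j}\), and read off the values at \(\pm s\) from the dictionary \eqref{eq:mu-dictionary}.

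\emph{First identity.} The dictionary gives \(\lambda(s)\xi(s)^j=\lambda(s)X^{4j}\) and \(\lambda(-s)\xi(-s)^j=\lambda(s)X^{-2}X^{-4j}\). Factoring out \(\lambda(s)X^{-1}\), we get
\[
2\,U\bigl(\lambda\xi^{\,j}\bigr)(q)=\lambda(s)X^{-1}\bigl(X^{4j+1}+X^{-(4j+1)}\bigr)
=\lambda(s)X^{-1}\bigl(X+X^{-1}\bigr)C_{4j+1}\bigl(\xi(q)\bigr),
\]
where the second equality is Lemma~\ref{lem:Cn-generating} together with the fourth relation of \eqref{eq:mu-dictionary}. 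It remains to identify the prefactor \(\lambda(s)X^{-1}(X+X^{-1})=\lambda(s)(1+X^{-2})=\lambda(s)+\lambda(-s)\). By Lemma~\ref{lem:U-lambda} this equals \(2\,U(\lambda)(q)=2\alpha(q)\lambda(q)\). Dividing by \(2\) gives the first identity. For \(j=0\) it is consistent with Lemma~\ref{lem:U-lambda}, since \(C_1=1\).

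\emph{Second identity.} Now \(\alpha(s)\lambda(s)\xi(s)^j=\lambda(s)X^{4j+2}\) and \(\alpha(-s)\lambda(-s)\xi(-s)^j=\lambda(s)X^{-2}X^{-2}X^{-4j}=\lambda(s)X^{-4j-4}\). Factoring out \(\lambda(s)X^{-1}\) again, we get
\[
2\,U\bigl(\alpha\lambda\xi^{\,j}\bigr)(q)=\lambda(s)X^{-1}\bigl(X^{4j+3}+X^{-(4j+3)}\bigr)=\lambda(s)X^{-1}\bigl(X+X^{-1}\bigr)C_{4j+3}\bigl(\xi(q)\bigr).
\]
The prefactor is the same as before and equals \(2\alpha\lambda\), so the second identity follows.

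\emph{Justification of the formal steps.} These are the main point requiring care, and I expect them to be routine. The substitution \(X\mapsto A(s)\) is a ring homomorphism \(\Q[X,X^{-1}]\to\Q[[s]]\), because \(A(s)\) is a unit. The identity \(\tfrac12(X^2+X^{-2})=\xi(q)\) holds in \(\Q[[s]]\), which is what allows Lemma~\ref{lem:Cn-generating} to be transported with \(x\mapsto\xi(q)\). The only genuine bookkeeping is checking the exponents coming from \(\lambda(-s)=\lambda(s)X^{-2}\), and this is what makes the odd indices \(4j+1\) and \(4j+3\) appear.
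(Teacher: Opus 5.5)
Your proof is correct and follows the paper's argument essentially verbatim. You evaluate $2U(H)(q)=H(s)+H(-s)$ via the dictionary \eqref{eq:mu-dictionary}, factor out $\lambda(s)X^{-1}$ to reach $X^{n}+X^{-n}$ with $n=4j+1$ or $4j+3$, apply Lemma~\ref{lem:Cn-generating}, and identify the prefactor $\lambda(s)X^{-1}(X+X^{-1})=\lambda(s)+\lambda(-s)$ with $2\alpha\lambda$ through Lemma~\ref{lem:U-lambda}, which is exactly the paper's normalisation step \eqref{eq:normalisation}.
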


\begin{proof}
We apply \eqref{eq:U-even-part} with \(q\) replaced by \(s\).  Using
\eqref{eq:mu-dictionary}, we have
\begin{equation}
2\,U\bigl(\lambda\xi^{\,j}\bigr)(q)
=\lambda(s)X^{4j}+\lambda(s)X^{-2}X^{-4j}
=\lambda(s)\,X^{-1}\bigl(X^{4j+1}+X^{-(4j+1)}\bigr),
\label{eq:U-lambda-xij-raw}
\end{equation}
and, since \(4j+1\) is odd, \eqref{eq:Cn-generating} together with the
last relation of \eqref{eq:mu-dictionary} turns this into
\begin{equation}
2\,U\bigl(\lambda\xi^{\,j}\bigr)(q)
=\lambda(s)\,X^{-1}\bigl(X+X^{-1}\bigr)\,C_{4j+1}\bigl(\xi(q)\bigr).
\label{eq:U-lambda-xij-C}
\end{equation}
For \(j=0\) the above becomes
\[2\,U(\lambda)(q)=\lambda(s)X^{-1}\bigl(X+X^{-1}\bigr),\] where
Lemma~\ref{lem:U-lambda} now gives us
\begin{equation}
\lambda(s)\,X^{-1}\bigl(X+X^{-1}\bigr)=2\,\alpha(q)\lambda(q).
\label{eq:normalisation}
\end{equation}
Substituting \eqref{eq:normalisation} into \eqref{eq:U-lambda-xij-C} gives
the first half of \eqref{eq:U-C-identities}.  

For the second half, a similar computation gives us
\[
2\,U\bigl(\alpha\lambda\xi^{\,j}\bigr)(q)
=X^{2}\lambda(s)X^{4j}+X^{-2}\lambda(s)X^{-2}X^{-4j}
=\lambda(s)\,X^{-1}\bigl(X^{4j+3}+X^{-(4j+3)}\bigr).
\]
Applying \eqref{eq:Cn-generating} with \(n=4j+3\), followed by
\eqref{eq:normalisation}, completes the proof.
\end{proof}

\begin{remark}
The four identities
\[
U(A)=\lambda,\qquad
U\bigl(A^3\bigr)=\lambda(2\xi-1),\qquad
U(\lambda)=\alpha\lambda,\qquad
U(\lambda\xi)=\alpha\lambda\bigl(4\xi^2-2\xi-1\bigr)
\]
are, respectively, the instances \(C_1\), \(C_3\), \(C_1\) and \(C_5\) of
\eqref{eq:U-Am} and \eqref{eq:U-C-identities}.  They were originally found using
\RaduRK \cite{Smoot,Radu} and certified with \ETA \cite{Garvan2019}.
\end{remark}

\section{The polynomial recurrence and its \texorpdfstring{$2$}{2}-adic
valuation}
\label{sec:recurrence}\label{sec:valuations}

Throughout this section and the next, \(m\ge1\) is a \emph{fixed} odd
integer, which we suppress from the notation.

\begin{definition}\label{def:WT}
Let \(\Wop,\Top:\Z[x]\to\Z[x]\) be the \(\Z\)-linear maps determined by
\[
\Wop\bigl(x^j\bigr):=C_{4j+1}(x),
\qquad
\Top\bigl(x^j\bigr):=C_{4j+3}(x)
\qquad(j\ge0).
\]
\end{definition}

By \eqref{eq:U-C-identities} and linearity, for every \(P\in\Z[x]\),
\begin{equation}
U\bigl(\lambda\,P(\xi)\bigr)=\alpha\lambda\,\Wop(P)(\xi),
\qquad
U\bigl(\alpha\lambda\,P(\xi)\bigr)=\alpha\lambda\,\Top(P)(\xi).
\label{eq:WT-action}
\end{equation}
Starting from \(U(A^m)=\lambda\,C_m(\xi)\) and applying
\eqref{eq:WT-action} twice yields the only two instances of
\eqref{eq:WT-action} that will be needed explicitly.

\begin{lemma}\label{lem:P}
Put \(P:=\Wop\bigl(C_m\bigr)\in\Z[x]\).  Then
\begin{equation}
U^2\bigl(A^m\bigr)=\alpha\lambda\,P(\xi),
\qquad
U^3\bigl(A^m\bigr)=\alpha\lambda\,\Top(P)(\xi),
\label{eq:U2-U3}
\end{equation}
and
\begin{equation}
P(1)=1,
\qquad
P-1\in2\Z[x],
\qquad
P'(1)\equiv0\pmod4 .
\label{eq:P-properties}
\end{equation}
\end{lemma}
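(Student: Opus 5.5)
The plan is to get the two identities \eqref{eq:U2-U3} by iterating the operator dictionary \eqref{eq:WT-action}, and the three properties \eqref{eq:P-properties} by expanding \(P\) in the monomial basis and applying Lemma~\ref{lem:Cn} term by term.

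\emph{The identities.} By Proposition~\ref{prop:U-Am}, \(U(A^m)=\lambda\,C_m(\xi)\) with \(C_m\in\Z[x]\). Applying the first half of \eqref{eq:WT-action} with \(C_m\) in place of \(P\) gives
\[
U^2(A^m)=U\bigl(\lambda\,C_m(\xi)\bigr)=\alpha\lambda\,\Wop(C_m)(\xi)=\alpha\lambda\,P(\xi).
\]
Since \(P\in\Z[x]\), the second half of \eqref{eq:WT-action} then gives
\[
U^3(A^m)=U\bigl(\alpha\lambda\,P(\xi)\bigr)=\alpha\lambda\,\Top(P)(\xi).
\]

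\emph{The properties.} Write \(C_m(x)=\sum_j c_jx^j\) with \(c_j\in\Z\). Then by Definition~\ref{def:WT},
\[
P=\sum_j c_j\,C_{4j+1}.
\]
\begin{enumerate}
\item[(a)] By Lemma~\ref{lem:Cn}(i), each \(C_{4j+1}(1)=1\). Hence \(P(1)=\sum_j c_j=C_m(1)=1\), again by Lemma~\ref{lem:Cn}(i).
\item[(b)] By Lemma~\ref{lem:Cn}(ii), each \(C_{4j+1}\equiv1\pmod{2\Z[x]}\). Hence \(P\equiv\sum_j c_j=1\pmod{2\Z[x]}\), so \(P-1\in2\Z[x]\).
\item[(c)] By Lemma~\ref{lem:Cn}(iii),
\[
P'(1)=\sum_j c_j\,\frac{(4j+1)^2-1}{4}=\sum_j c_j\,(4j^2+2j)=4\sum_j c_j j^2+2\,C_m'(1).
\]
Applying Lemma~\ref{lem:Cn}(iii) once more, \(C_m'(1)=(m^2-1)/4\). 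With \(m=2k+1\) this equals \(k(k+1)\), which is even. Therefore \(2C_m'(1)\equiv0\pmod4\), and so \(P'(1)\equiv0\pmod4\).
\end{enumerate}

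There is no serious obstacle. The only point needing care is (c): the factor \(2\) in front of \(C_m'(1)\) gives only one power of \(2\). The second power comes from the parity of \((m^2-1)/4=k(k+1)\), and this is exactly where the hypothesis that \(m\) is odd is used.
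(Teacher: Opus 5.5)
Your proposal is correct and follows the paper's proof essentially line for line. You iterate the two halves of \eqref{eq:WT-action} starting from \(U(A^m)=\lambda C_m(\xi)\), then expand \(P=\sum_j c_jC_{4j+1}\) and apply Lemma~\ref{lem:Cn}(i)--(iii) termwise, with \(C_m'(1)=(m^2-1)/4=k(k+1)\) being even supplying the second factor of \(2\) in \(P'(1)\).
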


\begin{proof}
By Proposition~\ref{prop:U-Am} we have \(U(A^m)=\lambda\,C_m(\xi)\), so
the first half of \eqref{eq:WT-action} applied to \(C_m\) gives the first
identity of \eqref{eq:U2-U3}, and the second half applied to \(P\) then
gives the second.

Write \(C_m(x)=\sum_j c_jx^j\), so that \(P=\sum_j c_j\,C_{4j+1}\).  

By
Lemma~\ref{lem:Cn}(i), \(C_{4j+1}(1)=1\); hence
\(P(1)=\sum_jc_j=C_m(1)=1\).  

By Lemma~\ref{lem:Cn}(ii),
\(C_{4j+1}\equiv1\pmod{2\Z[x]}\), so
\(P\equiv\sum_jc_j=1\pmod{2\Z[x]}\).

Finally Lemma~\ref{lem:Cn}(iii) gives
\(C_{4j+1}'(1)=\bigl((4j+1)^2-1\bigr)/4=2j(2j+1)\), so
\[
P'(1)
=\sum_j c_j\,2j(2j+1)
=2\sum_j jc_j+4\sum_j j^2c_j
\equiv2\,C_m'(1)\pmod 4 .
\]
Since \(C_m'(1)=(m^2-1)/4\) and \(m\) is odd, we have
\(m^2\equiv1\pmod8\), so \(C_m'(1)\) is even and
\(P'(1)\equiv0\pmod4\). 
\end{proof}

For \(c\in\Z\) let \(\nu(c)\) denote its \(2\)-adic valuation, with
\(\nu(0)=\infty\), and for \(F=\sum_hf_hx^h\in\Z[x]\) put
\(\nu(F):=\min_h\nu(f_h)\) (and \(\nu(0)=\infty\)), so that
\[
\nu(F)\ge r\iff F\in2^r\Z[x].
\]
Since \(F(1)=\sum_hf_h\) is a sum of the coefficients of \(F\), we always
have
\begin{equation}
\nu(F)\le\nu\bigl(F(1)\bigr).
\label{eq:nu-at-one}
\end{equation}

\begin{definition}\label{def:Theta}
For \(R\in\Z[x]\) the polynomial \(\Top\bigl((x-1)R\bigr)\) vanishes at
\(x=1\), since \(\Top\bigl(x^{j+1}-x^{j}\bigr)=C_{4j+7}-C_{4j+3}\) does by
Lemma~\ref{lem:Cn}(i).  We may therefore define a \(\Z\)-linear map
\(\Thop:\Z[x]\to\Z[x]\) by
\begin{equation}
\Thop(R):=\frac{\Top\bigl((x-1)R\bigr)}{x-1},
\qquad\text{equivalently}\qquad
\Thop\bigl(x^{j}\bigr)=\frac{C_{4j+7}-C_{4j+3}}{x-1}.
\label{eq:def-Theta}
\end{equation}
\end{definition}

\begin{lemma}\label{lem:contraction}
Let \(R\in\Z[x]\).  Then
\begin{enumerate}
\item[\textup{(a)}] \(\nu\bigl(\Thop(R)\bigr)\ge\nu(R)+1\);
\item[\textup{(b)}] \(\Thop(R)(1)=10\,R(1)+8\,R'(1)\);
\item[\textup{(c)}] if \(\nu(R)=\nu\bigl(R(1)\bigr)=r\), then
\(\nu\bigl(\Thop(R)\bigr)=\nu\bigl(\Thop(R)(1)\bigr)=r+1\).
\end{enumerate}
\end{lemma}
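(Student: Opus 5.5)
The plan is to reduce everything to the single-monomial formula \(\Thop(x^j)=(C_{4j+7}-C_{4j+3})/(x-1)\) from \eqref{eq:def-Theta} and then use \(\Z\)-linearity. Write \(R=\sum_j r_jx^j\), so that \(\Thop(R)=\sum_j r_j\,\Thop(x^j)\). Parts (a) and (b) are then statements about a single monomial, and (c) combines them with \eqref{eq:nu-at-one}.

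For (a), I would use Lemma~\ref{lem:Cn}(ii). Both \(C_{4j+7}-1\) and \(C_{4j+3}-1\) lie in \(2(x-1)\Z[x]\), so their difference \(C_{4j+7}-C_{4j+3}\) does too. Dividing by \(x-1\) in \(\Z[x]\), which is exact since \(x-1\) is monic, gives \(\Thop(x^j)\in2\Z[x]\). Hence \(\Thop(R)=\sum_j r_j\Thop(x^j)\in 2^{\nu(R)+1}\Z[x]\), because every \(r_j\) is divisible by \(2^{\nu(R)}\).

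For (b), I would use that a quotient \(F/(x-1)\) with \(F(1)=0\) evaluates at \(1\) to \(F'(1)\). Applied to the monomial formula, and using Lemma~\ref{lem:Cn}(iii), this gives
\[
\Thop(x^j)(1)=C_{4j+7}'(1)-C_{4j+3}'(1)=\frac{(4j+7)^2-(4j+3)^2}{4}=8j+10 .
\]
Summing against the coefficients \(r_j\) yields \(\Thop(R)(1)=\sum_j r_j(10+8j)=10R(1)+8R'(1)\).

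For (c), the hypothesis \(\nu(R)=r\) gives \(2^r\mid r_j\) for every \(j\), so \(2^r\mid R'(1)=\sum_j j r_j\) and \(\nu\bigl(8R'(1)\bigr)\ge r+3\). On the other hand \(\nu\bigl(10R(1)\bigr)=r+1\) by the assumption \(\nu(R(1))=r\). By (b) it follows that \(\nu\bigl(\Thop(R)(1)\bigr)=r+1\). Finally, (a) gives \(\nu(\Thop(R))\ge r+1\), and \eqref{eq:nu-at-one} gives \(\nu(\Thop(R))\le\nu(\Thop(R)(1))=r+1\), so equality holds throughout.

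No step is a real obstacle. The only point needing care is the divisibility in (a): one must check that division by \(x-1\) keeps the factor \(2\) and stays in \(\Z[x]\), which holds because Lemma~\ref{lem:Cn}(ii) already places the factor \(2(x-1)\) inside \(\Z[x]\).
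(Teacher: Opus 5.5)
Your proposal is correct and follows the paper's proof essentially step for step. Part (a) uses Lemma~\ref{lem:Cn}(ii) to get \(\Thop(x^j)\in2\Z[x]\); part (b) evaluates at \(x=1\) (equivalently, differentiates \(C_{4j+7}-C_{4j+3}=(x-1)\Thop(x^j)\) there) and uses Lemma~\ref{lem:Cn}(iii) to get \(\Thop(x^j)(1)=8j+10\); and part (c) squeezes \(\nu(\Thop(R))\) between (a) and \eqref{eq:nu-at-one}, with the only cosmetic difference being that you compute \(\bigl((4j+7)^2-(4j+3)^2\bigr)/4\) directly instead of listing the two derivatives separately.
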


\begin{proof}
Put \(H_j:=\Thop\bigl(x^{j}\bigr)=\bigl(C_{4j+7}-C_{4j+3}\bigr)/(x-1)\).
By Lemma~\ref{lem:Cn}(ii),
\[
C_{4j+7}-C_{4j+3}
=\bigl(C_{4j+7}-1\bigr)-\bigl(C_{4j+3}-1\bigr)\in2(x-1)\Z[x],
\]
so \(H_j\in2\Z[x]\).  Writing \(R=\sum_ja_jx^{j}\) we have
\(\Thop(R)=\sum_ja_jH_j\), and \(2^{\nu(R)}\) divides every \(a_j\); this
proves (a).

Since \(C_{4j+7}(1)=C_{4j+3}(1)=1\) by Lemma~\ref{lem:Cn}(i),
differentiating \(C_{4j+7}-C_{4j+3}=(x-1)H_j\) at \(x=1\) gives
\(H_j(1)=C_{4j+7}'(1)-C_{4j+3}'(1)\).  By Lemma~\ref{lem:Cn}(iii),
\begin{equation}
C_{4j+3}'(1)=\frac{(4j+3)^2-1}{4}=4j^2+6j+2,
\qquad
C_{4j+7}'(1)=\frac{(4j+7)^2-1}{4}=4j^2+14j+12,
\label{eq:C-derivatives}
\end{equation}
hence \(H_j(1)=8j+10\) and
\[
\Thop(R)(1)=\sum_ja_j\bigl(8j+10\bigr)=8\,R'(1)+10\,R(1),
\]
which is (b).

For (c), assume \(\nu(R)=\nu\bigl(R(1)\bigr)=r\).  Then
\(\nu\bigl(10R(1)\bigr)=1+r\), whereas \(2^{r}\) divides every coefficient
of \(R\), hence also of \(R'\), so that
\(\nu\bigl(8R'(1)\bigr)\ge3+r>1+r\).  By (b), therefore,
\(\nu\bigl(\Thop(R)(1)\bigr)=r+1\).  Combining this with (a) and
\eqref{eq:nu-at-one},
\[
r+1\le\nu\bigl(\Thop(R)\bigr)\le\nu\bigl(\Thop(R)(1)\bigr)=r+1,
\]
so both are equal to \(r+1\).
\end{proof}

\section{Proof of the main theorems}\label{sec:proofs}

Throughout, \(m\ge1\) is the fixed odd integer of
Section~\ref{sec:recurrence} and
\begin{equation}
\mathcal D_i:=U^i\bigl(A^m\bigr)-U^{i-1}\bigl(A^m\bigr)
=\sum_{n\ge0}\Bigl(\OPT_m\bigl(2^in\bigr)-\OPT_m\bigl(2^{i-1}n\bigr)\Bigr)q^n
\qquad(i\ge1).
\label{eq:def-Di}
\end{equation}
Since \(\OPT_m(0)=1\), the constant term of \(\mathcal D_i\) vanishes.
Because \(U\) is \(\Z\)-linear,
\begin{equation}
\mathcal D_{i+1}=U\bigl(\mathcal D_i\bigr)
\qquad(i\ge1).
\label{eq:D-recursion}
\end{equation}

\subsection{The case \texorpdfstring{$i\ge3$}{i>=3}}

With \(P=\Wop(C_m)\) as in Lemma~\ref{lem:P}, define
\begin{equation}
R_3:=\frac{\Top(P)-P}{x-1},
\qquad
R_{i+1}:=\Thop\bigl(R_i\bigr)\quad(i\ge3);
\label{eq:def-Ri}
\end{equation}
here \(R_3\in\Z[x]\) because \(\Top(P)(1)=P(1)\), by
Lemma~\ref{lem:Cn}(i).

\begin{proposition}\label{prop:Ri}
For every \(i\ge3\),
\[
\mathcal D_i=\alpha\lambda\,\bigl(\xi-1\bigr)\,R_i(\xi)
\qquad\text{and}\qquad
\nu\bigl(R_i\bigr)=\nu\bigl(R_i(1)\bigr)=i-2 .
\]
\end{proposition}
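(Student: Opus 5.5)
The plan is induction on \(i\ge3\), proving the two assertions separately: the identity \(\mathcal D_i=\alpha\lambda(\xi-1)R_i(\xi)\) and the valuation statement \(\nu(R_i)=\nu(R_i(1))=i-2\).

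\emph{Base case \(i=3\).} By Lemma~\ref{lem:P},
\[
\mathcal D_3=U^3(A^m)-U^2(A^m)=\alpha\lambda\bigl(\Top(P)(\xi)-P(\xi)\bigr).
\]
By the definition \eqref{eq:def-Ri} of \(R_3\) this equals \(\alpha\lambda(\xi-1)R_3(\xi)\).

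For the valuation we need \(\nu(R_3)\ge1\) and \(\nu(R_3(1))=1\). Write \(P=\sum_j p_jx^j\). Then
\[
\Top(P)-P=\sum_j p_j\bigl(C_{4j+3}-x^j\bigr).
\]
Since \(P-1\in2\Z[x]\), decompose \(P=1+2Q\). The constant part contributes \((C_3-1)/(x-1)=2\). The remaining part is \(2\bigl(\Top(Q)-Q\bigr)/(x-1)\), and \(\Top(Q)-Q\) vanishes at \(x=1\) because \(Q(1)=0\). Hence
\[
R_3=2+2\,\frac{\Top(Q)-Q}{x-1}\in2\Z[x].
\]
To compute \(R_3(1)\), differentiate \(\Top(P)-P=(x-1)R_3\) at \(x=1\):
\[
R_3(1)=\sum_j p_j\Bigl(\tfrac{(4j+3)^2-1}{4}-j\Bigr)=\sum_j p_j(4j^2+5j+2)=4\sum_j j^2p_j+5P'(1)+2P(1).
\]
Using \(P(1)=1\) and \(P'(1)\equiv0\pmod4\) from \eqref{eq:P-properties}, this gives \(R_3(1)\equiv2\pmod4\), so \(\nu(R_3(1))=1\). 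Combined with \eqref{eq:nu-at-one} and \(R_3\in2\Z[x]\), we get \(\nu(R_3)=1\). This base case, especially checking \(R_3\in2\Z[x]\), is the step I expect to need the most care.

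\emph{Inductive step.} By \eqref{eq:D-recursion} and \(\Z\)-linearity,
\[
\mathcal D_{i+1}=U\bigl(\alpha\lambda\,((\xi-1)R_i)(\xi)\bigr)=\alpha\lambda\,\Top\bigl((x-1)R_i\bigr)(\xi),
\]
using the second half of \eqref{eq:WT-action}. By \eqref{eq:def-Theta}, \(\Top\bigl((x-1)R_i\bigr)=(x-1)\Thop(R_i)=(x-1)R_{i+1}\), which gives the identity for \(i+1\). For the valuation, the hypothesis \(\nu(R_i)=\nu(R_i(1))=i-2\) is exactly the hypothesis of Lemma~\ref{lem:contraction}(c). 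That lemma then gives \(\nu(R_{i+1})=\nu(R_{i+1}(1))=i-1\), completing the induction.
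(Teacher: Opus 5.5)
Your proof is correct and follows the paper's argument essentially step for step: the base-case identity comes from Lemma~\ref{lem:P}, differentiating at \(x=1\) gives \(R_3(1)=\sum_j p_j(4j^2+5j+2)\equiv2\pmod4\), and the induction goes through Lemma~\ref{lem:contraction}(c). The only difference is cosmetic: you obtain \(R_3\in2\Z[x]\) by writing \(P=1+2Q\) and using \(\Top(1)=C_3=2x-1\), whereas the paper notes \(\Top(P)\equiv P\equiv1\pmod{2\Z[x]}\); also, in your inductive step \(\bigl((\xi-1)R_i\bigr)(\xi)\) should read \(\bigl((x-1)R_i\bigr)(\xi)\).
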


\begin{proof}
We first deal with the case \(i=3\).  By \eqref{eq:U2-U3},
\[
\mathcal D_3=U^3\bigl(A^m\bigr)-U^2\bigl(A^m\bigr)
=\alpha\lambda\bigl(\Top(P)-P\bigr)(\xi)
=\alpha\lambda\,(\xi-1)\,R_3(\xi).
\]
By Lemma~\ref{lem:Cn}(ii) we have \(\Top(P)\equiv P(1)=1\pmod{2\Z[x]}\),
while \(P\equiv1\pmod{2\Z[x]}\) by \eqref{eq:P-properties}.  Hence
\(\Top(P)-P=2S\) with \(S\in\Z[x]\) and \(S(1)=0\), so
\(S\in(x-1)\Z[x]\) and \(R_3\in2\Z[x]\), that is, \(\nu(R_3)\ge1\).

To evaluate \(R_3(1)\), write \(P=\sum_jb_jx^{j}\) and differentiate
\(\Top(P)-P=(x-1)R_3\) at \(x=1\).  Using \eqref{eq:C-derivatives},
\[
R_3(1)=\sum_jb_j\bigl(4j^2+6j+2\bigr)-\sum_jjb_j
=\sum_jb_j\bigl(4j^2+5j+2\bigr).
\]
Since \(\sum_jj^2b_j=P''(1)+P'(1)\), this becomes
\begin{equation}
R_3(1)=4P''(1)+9P'(1)+2P(1)=4P''(1)+9P'(1)+2 .
\label{eq:R3-at-1}
\end{equation}
By \eqref{eq:P-properties}, \(P'(1)\equiv0\pmod4\) and \(P(1)=1\), so
\(R_3(1)\equiv2\pmod4\) and \(\nu\bigl(R_3(1)\bigr)=1\).  Together with
\(\nu(R_3)\ge1\) and \eqref{eq:nu-at-one} this forces
\(\nu(R_3)=\nu\bigl(R_3(1)\bigr)=1\), which is the case \(i=3\).

Now we move to the induction.  Suppose the assertion holds for some
\(i\ge3\).  Applying \eqref{eq:D-recursion}, then the second half of
\eqref{eq:WT-action} to the polynomial \((x-1)R_i\), and finally
\eqref{eq:def-Theta},
\[
\mathcal D_{i+1}
=U\bigl(\mathcal D_i\bigr)
=U\Bigl(\alpha\lambda\,\bigl((x-1)R_i\bigr)(\xi)\Bigr)
=\alpha\lambda\,\Top\bigl((x-1)R_i\bigr)(\xi)
=\alpha\lambda\,(\xi-1)\,R_{i+1}(\xi).
\]
Since \(\nu(R_i)=\nu\bigl(R_i(1)\bigr)=i-2\) by hypothesis,
Lemma~\ref{lem:contraction}(c) gives
\(\nu\bigl(R_{i+1}\bigr)=\nu\bigl(R_{i+1}(1)\bigr)=i-1=(i+1)-2\).
\end{proof}

\begin{proof}[Proof of Theorem~\ref{thm:profile}]
Fix \(i\ge3\) and write \(R_i=2^{\,i-2}S\) with \(S\in\Z[x]\)
(Proposition~\ref{prop:Ri}).  Since \(\xi\equiv1\pmod2\) by
\eqref{eq:congruence-shapes}, we have \(S(\xi)\equiv S(1)\pmod2\), and
\(S(1)=R_i(1)/2^{\,i-2}\) is odd, again by
Proposition~\ref{prop:Ri}.  Hence
\[
\frac{R_i(\xi)}{2^{\,i-2}}=S(\xi)\equiv1\pmod2 .
\]
By \eqref{eq:alpha-xi-minus-one},
\(\dfrac{\xi-1}{8}=(a+a^2)\bigl(1+2a+2a^2\bigr)\equiv a+a^2\pmod2\), and by
\eqref{eq:congruence-shapes} \(\alpha\lambda\equiv1\pmod 2\).  Therefore,
by Proposition~\ref{prop:Ri},
\[
\frac{\mathcal D_i}{2^{\,i+1}}
=\alpha\lambda\cdot\frac{\xi-1}{8}\cdot\frac{R_i(\xi)}{2^{\,i-2}}
\equiv a+a^2
\equiv\sum_{k\ge0}q^{(2k+1)^2}
\pmod 2,
\]
where the last step used Lemma~\ref{lem:a-mod-2}.  Multiplying by \(2^{i+1}\)
gives the stated congruence modulo \(2^{i+2}\).  

The assertion about
valuations follows, since a coefficient of \(\mathcal D_i\) has
valuation exactly \(i+1\) precisely when the corresponding coefficient of
\(\sum_{k\ge0}q^{(2k+1)^2}\) is odd, that is, when \(n\) is an odd
square.
\end{proof}

Note that Theorem~\ref{thm:profile} contains the congruence
\(\OPT_m(2^in)\equiv\OPT_m(2^{i-1}n)\pmod{2^{i+1}}\) for \(i\ge3\), which
is the case \(i\ge3\) of Corollary~\ref{thm:main}.

\subsection{The boundary cases \texorpdfstring{$i=1,2$}{i=1,2}}

\begin{lemma}\label{lem:lambda-minus-A}
We have
\[
\lambda-A=\frac{4q^2\,\psi\bigl(q^{16}\bigr)}{\varphi(-q)}
\in 4q^2\Z[[q]],
\]
and the coefficient of \(q^2\) in \(\lambda-A\) equals \(4\).
\end{lemma}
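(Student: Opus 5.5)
We write both series over the common denominator \(\varphi(-q)\), using the theta-quotient forms already available. By \eqref{eq:def-auxiliary}, \(\lambda=\varphi(q^2)/\varphi(-q)\), and by \eqref{eq:A-theta}, \(A=\varphi(-q^2)/\varphi(-q)\). Hence
\[
\lambda-A=\frac{\varphi(q^2)-\varphi(-q^2)}{\varphi(-q)} .
\]
The second identity of \eqref{eq:phi-sum-difference} gives \(\varphi(q)-\varphi(-q)=4q\,\psi(q^8)\). Replacing \(q\) by \(q^2\) turns this into
\[
\varphi(q^2)-\varphi(-q^2)=4q^2\,\psi(q^{16}),
\]
which is the claimed closed form.

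For integrality, \(\psi(q^{16})\in\Z[[q]]\) has constant term \(1\). Also \(\varphi(-q)\in1+2q\Z[[q]]\) by \eqref{eq:phi-minus-series}, so it is a unit in \(\Z[[q]]\) and \(\varphi(-q)^{-1}\in\Z[[q]]\). Therefore \(\lambda-A=4q^2\cdot\psi(q^{16})\cdot\varphi(-q)^{-1}\in4q^2\Z[[q]]\).

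For the coefficient of \(q^2\), write \(\lambda-A=4q^2G(q)\) with \(G=\psi(q^{16})\varphi(-q)^{-1}\). We need only \(G(0)\). Both \(\psi(q^{16})\) and \(\varphi(-q)^{-1}\) have constant term \(1\), so \(G(0)=1\). Thus the coefficient of \(q^2\) in \(\lambda-A\) is \(4\).

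There is no real obstacle here. The only point needing care is the substitution \(q\mapsto q^2\) in \eqref{eq:phi-sum-difference}, which must be applied to the odd-index (difference) identity rather than the sum identity. As a check, expanding \(\varphi(-q)^{-1}=1+2q+\cdots\) confirms that the \(q^2\) coefficient is unaffected by higher terms of either factor.
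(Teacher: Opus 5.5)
Your proposal is correct and matches the paper's proof: you put both series over the common denominator \(\varphi(-q)\), apply \(q\mapsto q^2\) to the difference identity in \eqref{eq:phi-sum-difference}, and note that \(\psi(q^{16})\) and \(\varphi(-q)^{-1}\) both lie in \(\Z[[q]]\) with constant term \(1\). The only cosmetic difference is that the paper gets the integrality of \(\varphi(-q)^{-1}\) from the product form \(f_2/f_1^2\), while you get it from \(\varphi(-q)\) being a unit; either works.
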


\begin{proof}
By \eqref{eq:def-auxiliary} and \eqref{eq:A-theta},
\(\lambda-A=\bigl(\varphi(q^2)-\varphi(-q^2)\bigr)/\varphi(-q)\).
Replacing \(q\) by \(q^2\) in the second identity of
\eqref{eq:phi-sum-difference} gives
\(\varphi(q^2)-\varphi(-q^2)=4q^2\psi\bigl(q^{16}\bigr)\).  Finally
\(1/\varphi(-q)=f_2/f_1^2\in1+q\Z[[q]]\) and
\(\psi(q^{16})\in1+q^{16}\Z[[q]]\).
\end{proof}

\begin{proof}[Proof of Corollary~\ref{thm:main} for \(i=1,2\), and of
Theorem~\ref{thm:boundary}]
Write \(m=2k+1\), so that \(A^m=A\,\alpha^{k}\).

\smallskip\noindent\emph{The case \(i=1\).}
By Proposition~\ref{prop:U-Am},
\begin{equation}
\mathcal D_1
=\lambda\,C_m(\xi)-A\,\alpha^{k}
=\bigl(\lambda-A\bigr)
+\lambda\bigl(C_m(\xi)-1\bigr)
+A\bigl(1-\alpha^{k}\bigr).
\label{eq:D1-split}
\end{equation}
By Lemma~\ref{lem:Cn}(ii) and \eqref{eq:congruence-shapes},
\(C_m(\xi)-1\in2(\xi-1)\Z[[q]]\subseteq16q\Z[[q]]\).  By
\eqref{eq:congruence-shapes}, \(\alpha-1\in4q\Z[[q]]\) and hence
\(1-\alpha^k\in4q\Z[[q]]\).  With Lemma~\ref{lem:lambda-minus-A} this gives
\(\mathcal D_1\in4q\Z[[q]]\), which is the case \(i=1\) of
Corollary~\ref{thm:main}.

Note \(\bigl(\alpha-1\bigr)^2\in16q^2\Z[[q]]\),
so \(1-\alpha^{k}\equiv-k\bigl(\alpha-1\bigr)\pmod{16q^2\Z[[q]]}\).  By
\eqref{eq:alpha-xi-minus-one}, \(\alpha-1=4(a+a^2)\) with
\(a=q+q^2+2q^3+\cdots\), so \(a+a^2=q+2q^2+\cdots\) and
\(A\bigl(\alpha-1\bigr)=4A\bigl(a+a^2\bigr)=4q+16q^2+\cdots\).
Extracting the coefficient of \(q^1\) in \eqref{eq:D1-split}, and using
that \(\lambda-A\in4q^2\Z[[q]]\) and \(C_m(\xi)-1\in16q\Z[[q]]\), we get
\[
\OPT_m(2)-\OPT_m(1)\equiv-4k\pmod{16},
\]
so this quantity has \(2\)-adic valuation exactly \(2\) if and only if
\(k\) is odd, i.e.\ \(m\equiv3\pmod4\).

\smallskip\noindent\emph{The case \(i=2\).}
By Proposition~\ref{prop:U-Am} and \eqref{eq:U2-U3},
\begin{equation}
\mathcal D_2
=\lambda\Bigl(\alpha\,P(\xi)-C_m(\xi)\Bigr)
=\lambda\Bigl(\bigl(\alpha-1\bigr)P(\xi)
+\bigl(P-C_m\bigr)(\xi)\Bigr).
\label{eq:D2-split}
\end{equation}
By \eqref{eq:P-properties} and Lemma~\ref{lem:Cn}(ii), both \(P\) and
\(C_m\) are congruent to \(1\) modulo \(2\Z[x]\) and take the value \(1\)
at \(x=1\); hence \(P-C_m\in2(x-1)\Z[x]\) and therefore
\(\bigl(P-C_m\bigr)(\xi)\in16q\Z[[q]]\).  Since
\(\alpha-1\in4q\Z[[q]]\), \eqref{eq:D2-split} gives
\(\mathcal D_2\in4q\Z[[q]]\), which is the case \(i=2\) of
Corollary~\ref{thm:main}.

Moreover \(\alpha-1=4(a+a^2)=4q+\cdots\), \(P(\xi)\in1+q\Z[[q]]\)
and \(\lambda\in1+q\Z[[q]]\), so the coefficient of \(q^1\) in
\eqref{eq:D2-split} is congruent to \(4\) modulo \(16\).  Hence
\[
\OPT_m(4)-\OPT_m(2)\equiv4\pmod{16},
\]
of valuation exactly \(2\).
\end{proof}

\section{A conjectural extension to even \texorpdfstring{$m$}{m}}
\label{sec:even}

We briefly describe what changes when \(m\ge2\) is even.  Write
\[
\mathcal D_i^{(m)}
:=U^i\bigl(A^m\bigr)-U^{i-1}\bigl(A^m\bigr)
\qquad(i\ge1).
\]
Unlike the odd case, the iteration now takes place entirely inside
\(\Z[\xi]\).

Define the polynomials \(\Dick_n\in\Z[x]\) by
\[
\Dick_0=1,\qquad
\Dick_1=x,\qquad
\Dick_{n+1}=2x\Dick_n-\Dick_{n-1}
\quad(n\ge1).
\]
Thus \(\Dick_n\) is the \(n\)-th Chebyshev polynomial of the first kind,
and
\begin{equation}
Y^n+Y^{-n}
=2\Dick_n\!\left(\frac{Y+Y^{-1}}{2}\right).
\label{eq:Dn-generating}
\end{equation}
In particular,
\begin{equation}
\Dick_n(1)=1,\qquad
\Dick_n'(1)=n^2,\qquad
\Dick_{2n}=2\Dick_n^2-1.
\label{eq:Dn-properties}
\end{equation}

Applying \eqref{eq:Dn-generating} to the even parts of \(A^m\) and
\(\xi^j\) gives
\begin{equation}
U\bigl(A^m\bigr)=\Dick_{m/2}(\xi),
\qquad
U\bigl(\xi^j\bigr)=\Dick_{2j}(\xi).
\label{eq:even-U-identities}
\end{equation}
Accordingly, define the \(\Z\)-linear operator
\[
\Sop:\Z[x]\longrightarrow\Z[x],
\qquad
\Sop(x^j):=\Dick_{2j}(x).
\]

The starting point depends on the residue class of \(m\) modulo \(4\).
Put
\begin{equation}
i_m:=
\begin{cases}
1,&4\mid m,\\
2,&m\equiv2\pmod4,
\end{cases}
\qquad
P_{i_m-1}:=
\begin{cases}
x^{m/4},&4\mid m,\\
\Dick_{m/2}(x),&m\equiv2\pmod4,
\end{cases}
\label{eq:even-start}
\end{equation}
and define \(P_{i+1}:=\Sop(P_i)\) for \(i\ge i_m-1\).  Then
\[
U^i\bigl(A^m\bigr)=P_i(\xi)
\qquad(i\ge i_m-1).
\]
Since \(\Sop(P)(1)=P(1)\), the polynomial \(P_i-P_{i-1}\) is divisible
by \(x-1\).  We may therefore put
\begin{equation}
R_i^{(m)}:=\frac{P_i-P_{i-1}}{x-1},
\qquad
\mathcal D_i^{(m)}
=(\xi-1)R_i^{(m)}(\xi)
\qquad(i\ge i_m).
\label{eq:even-Ri}
\end{equation}

The following recursion is the even analogue of
Lemma~\ref{lem:contraction}.

\begin{lemma}\label{lem:even-recursion}
For \(R\in\Z[x]\), the polynomial
\[
\Thev(R):=\frac{\Sop\bigl((x-1)R\bigr)}{x-1}
\]
belongs to \(\Z[x]\), and
\begin{equation}
\Thev(R)(1)=4R(1)+8R'(1).
\label{eq:even-recursion-at-1}
\end{equation}
Moreover,
\[
R_{i+1}^{(m)}=\Thev\bigl(R_i^{(m)}\bigr)
\qquad(i\ge i_m).
\]
\end{lemma}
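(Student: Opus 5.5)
We must show three things: that \(\Thev(R)\) is an integer polynomial, that \(\Thev(R)(1)=4R(1)+8R'(1)\), and that \(R_{i+1}^{(m)}=\Thev(R_i^{(m)})\). We argue on the basis \(x^j\) and then extend by \(\Z\)-linearity.

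\emph{Integrality.} Since \(\Sop\) is \(\Z\)-linear, it is enough to treat \(R=x^j\). Then
\[
\Sop\bigl((x-1)x^j\bigr)=\Dick_{2j+2}-\Dick_{2j}.
\]
By \eqref{eq:Dn-properties} both terms take the value \(1\) at \(x=1\), so the difference vanishes there. Because \(x-1\) is monic, dividing by it in \(\Z[x]\) keeps integer coefficients. Thus
\[
\Thev(x^j)=\frac{\Dick_{2j+2}-\Dick_{2j}}{x-1}\in\Z[x],
\]
and linearity gives \(\Thev(R)\in\Z[x]\) for every \(R\in\Z[x]\).

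\emph{Value at \(1\).} Write \(K_j:=\Thev(x^j)\), so that \((x-1)K_j=\Dick_{2j+2}-\Dick_{2j}\). Differentiating at \(x=1\) and using \(\Dick_n'(1)=n^2\) from \eqref{eq:Dn-properties} gives
\[
K_j(1)=(2j+2)^2-(2j)^2=8j+4.
\]
For \(R=\sum_j a_jx^j\) we then get
\[
\Thev(R)(1)=\sum_j a_j(8j+4)=4R(1)+8R'(1),
\]
which is \eqref{eq:even-recursion-at-1}. This is the same computation as in Lemma~\ref{lem:contraction}(b).

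\emph{The recursion.} The one point to check is that \((x-1)R_i^{(m)}\) really equals \(P_i-P_{i-1}\). That holds by the definition \eqref{eq:even-Ri}, which is valid for \(i\ge i_m\) because \(P_{i-1}\) is defined there; indeed \(i-1\ge i_m-1\), the starting index. By linearity of \(\Sop\) and the definition \(P_{i+1}=\Sop(P_i)\), we have
\[
\Sop\bigl((x-1)R_i^{(m)}\bigr)=\Sop(P_i)-\Sop(P_{i-1})=P_{i+1}-P_i.
\]
Here \(\Sop(P_{i-1})=P_i\) requires \(i-1\ge i_m-1\), which is our hypothesis \(i\ge i_m\). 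Dividing by \(x-1\) gives
\[
\Thev\bigl(R_i^{(m)}\bigr)=\frac{P_{i+1}-P_i}{x-1}=R_{i+1}^{(m)}.
\]
Cancelling \(x-1\) is legitimate because \(\Z[x]\) is an integral domain.

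The only properties of \(\Dick_n\) used are \(\Dick_n(1)=1\) and \(\Dick_n'(1)=n^2\), both taken from \eqref{eq:Dn-properties}.
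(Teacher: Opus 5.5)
Your proof is correct. The recursion step is exactly the paper's. For integrality and the value at \(1\), however, you take a different and shorter route.

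The paper writes \(\Dick_j-1=(x-1)e_j\) with \(e_j(1)=j^2\). It then uses the duplication formula \(\Dick_{2j}=2\Dick_j^2-1\) to get \(\Dick_{2j}-1=4(x-1)e_j+2(x-1)^2e_j^2\), and hence the explicit formula
\[
\Thev(R)=4\sum_ja_j(e_{j+1}-e_j)+2(x-1)\sum_ja_j\bigl(e_{j+1}^2-e_j^2\bigr).
\]
Integrality and \(\Thev(R)(1)=4R(1)+8R'(1)\) are both read off from this formula.

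You use only \(\Dick_n(1)=1\) and \(\Dick_n'(1)=n^2\). Vanishing at \(1\) plus division by the monic \(x-1\) gives integrality. Differentiating \((x-1)K_j=\Dick_{2j+2}-\Dick_{2j}\) at \(x=1\) gives \(K_j(1)=8j+4\). This is the same device the paper uses for \(\Thop\) in Lemma~\ref{lem:contraction}(b), so your version treats the odd and even operators uniformly and never needs the duplication formula.

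In return, the paper's decomposition yields extra \(2\)-adic information that the lemma does not claim. It exhibits \(\Thev(R)\in2\Z[x]\), so \(\nu(\Thev(R))\ge\nu(R)+1\), which is the analogue of Lemma~\ref{lem:contraction}(a). It also gives the finer shape \(4(\cdot)+2(x-1)(\cdot)\), the kind of structure one would want when attacking Conjecture~\ref{conj:even}. To recover the valuation gain along your route, you would add the parity fact \(\Dick_{2n}\equiv1\pmod{2\Z[x]}\). That fact follows from the recurrence, just as Lemma~\ref{lem:Cn}(ii) was used for \(\Thop\).
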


\begin{proof}
Write
\[
\Dick_j(x)-1=(x-1)e_j(x),
\qquad e_j\in\Z[x].
\]
By \eqref{eq:Dn-properties}, \(e_j(1)=j^2\), and
\[
\Dick_{2j}(x)-1
=4(x-1)e_j(x)+2(x-1)^2e_j(x)^2.
\]
If \(R=\sum_ja_jx^j\), it follows that
\[
\Thev(R)
=
4\sum_ja_j(e_{j+1}-e_j)
+
2(x-1)\sum_ja_j(e_{j+1}^2-e_j^2),
\]
which proves that \(\Thev(R)\in\Z[x]\).  Evaluating at \(x=1\) gives
\[
\Thev(R)(1)
=4\sum_ja_j\bigl((j+1)^2-j^2\bigr)
=4R(1)+8R'(1).
\]
Finally,
\[
P_{i+1}-P_i
=\Sop(P_i-P_{i-1})
=\Sop\bigl((x-1)R_i^{(m)}\bigr),
\]
which proves the stated recursion.
\end{proof}

The essential difference from the odd case is visible in
\eqref{eq:even-recursion-at-1}.  For the odd-\(m\) operator, the term
\(10R(1)\) determines the valuation and produces one additional power
of \(2\) at each step.  In the even case, the two terms
\(4R(1)\) and \(8R'(1)\) have the same valuation and appear to gain an
extra power of \(2\) through cancellation.  This leads to the following
conjecture.

\begin{conjecture}\label{conj:even}
Let \(m\ge2\) be even and put
\[
v_i:=\nu\bigl(R_i^{(m)}\bigr).
\]
Then, for every \(i>i_m\),
\[
\nu\Bigl(\bigl(R_i^{(m)}\bigr)'(1)\Bigr)=v_i,
\qquad
\nu\bigl(R_i^{(m)}(1)\bigr)=v_i+1,
\qquad
v_{i+1}=v_i+3.
\]
\end{conjecture}

The conjecture has the following immediate consequence.

\begin{corollary}[Conditional on Conjecture~\ref{conj:even}]
\label{cor:even}
For every even \(m\ge2\), there is an integer \(c_m\) such that, for all
\(i>i_m\) and \(n\ge0\),
\[
\OPT_m\bigl(2^in\bigr)
\equiv
\OPT_m\bigl(2^{i-1}n\bigr)
\pmod{2^{\,3i+c_m}}.
\]
The modulus is exact and is attained at \(n=1\).  Equivalently,
\[
3i+c_m=v_i+4.
\]
\end{corollary}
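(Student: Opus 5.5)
The plan is to mirror the proof of Theorem~\ref{thm:profile}, using \eqref{eq:even-Ri} together with the $2$-adic shape of $\xi$. For $i>i_m$ we have
\[
\mathcal D_i^{(m)}=(\xi-1)R_i^{(m)}(\xi).
\]
By \eqref{eq:alpha-xi-minus-one} and Lemma~\ref{lem:a-mod-2}, $(\xi-1)/8\equiv a+a^2\equiv\sum_{k\ge0}q^{(2k+1)^2}\pmod 2$. In particular $(\xi-1)/8$ has coefficient $1$ at $q^1$, and its remaining terms lie in $q^2\Z[[q]]$ modulo $2$.

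Write $R_i^{(m)}=2^{v_i}S_i$ with $S_i\in\Z[x]$. This gives
\[
\mathcal D_i^{(m)}=2^{v_i+3}\cdot\frac{\xi-1}{8}\cdot S_i(\xi).
\]
Every coefficient is therefore divisible by $2^{v_i+3}$. This already yields a congruence modulo $2^{v_i+3}$ for all $n$. To pin down the exact modulus at $n=1$, I will look at the coefficient of $q^1$ in $\mathcal D_i^{(m)}$. Since $\xi\in1+8q\Z[[q]]$, we have $S_i(\xi)=S_i(1)+O(q)$. So this coefficient equals $(\xi-1)_{[q^1]}\cdot R_i^{(m)}(1)$.

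Next, $\xi=A^4$ and $A=1+2q+\cdots$, so the coefficient of $q$ in $\xi-1$ is $8$. Hence $\OPT_m(2^i)-\OPT_m(2^{i-1})=8R_i^{(m)}(1)$. By Conjecture~\ref{conj:even}, its valuation is $v_i+4$. This is exact at $n=1$.

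The remaining issue is divisibility for all $n$ by $2^{v_i+4}$, not merely $2^{v_i+3}$. This is where I expect the main obstacle: I need $S_i(\xi)\equiv0\pmod 2$. Modulo $2$ we have $\xi\equiv1$, so $S_i(\xi)\equiv S_i(1)\pmod 2$. By the conjecture $\nu(R_i^{(m)}(1))=v_i+1$, so $S_i(1)$ is even. Hence $\mathcal D_i^{(m)}\in2^{v_i+4}\Z[[q]]$, giving the congruence modulo $2^{v_i+4}$ for every $n\ge0$, with equality of valuation at $n=1$.

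Finally, the conjecture gives $v_{i+1}=v_i+3$ for $i>i_m$. Hence $v_i+4=3i+c_m$ with $c_m:=v_{i_m+1}+4-3(i_m+1)$, an integer depending only on $m$. That establishes both the displayed congruence and the equivalence $3i+c_m=v_i+4$. The only delicate point is checking that the $q^1$ coefficient of $S_i(\xi)$'s correction terms cannot interfere. That holds because $\xi-1\in8q\Z[[q]]$ makes $(\xi-1)(S_i(\xi)-S_i(1))\in q^2\Z[[q]]$.
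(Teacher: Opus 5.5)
Your proposal is correct and follows essentially the same route as the paper. In both, you factor \(R_i^{(m)}=2^{v_i}S_i\), use \(\xi\equiv1\pmod 2\) and the evenness of \(S_i(1)\) to get \((\xi-1)R_i^{(m)}(\xi)\in2^{v_i+4}\Z[[q]]\), read off exactness from the \(q^1\) coefficient \(8R_i^{(m)}(1)\), and use \(v_{i+1}=v_i+3\) to make \(v_i+4-3i\) constant; the detour through Lemma~\ref{lem:a-mod-2} is unnecessary, since only \(\xi-1\in8q\Z[[q]]\) with \(q\)-coefficient exactly \(8\) is used, but it does no harm.
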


\begin{proof}
Write \(R_i^{(m)}=2^{v_i}S_i\).  By the conjecture, \(S_i(1)\) is even
but not divisible by \(4\).  Since \(\xi-1\in8q\Z[[q]]\),
\[
S_i(\xi)\equiv S_i(1)\pmod{8q\Z[[q]]},
\]
and hence \(R_i^{(m)}(\xi)\) is divisible by \(2^{v_i+1}\).  Therefore
\eqref{eq:even-Ri} is divisible by \(2^{v_i+4}\).  The coefficient of
\(q\) has valuation exactly \(v_i+4\), because the coefficient of \(q\)
in \(\xi-1\) has valuation \(3\) and
\(\nu(R_i^{(m)}(1))=v_i+1\).  Finally,
\(v_{i+1}=v_i+3\) shows that \(v_i+4-3i\) is independent of \(i\).
\end{proof}

Computations for \(2\le m\le14\) and \(i\le9\), carried out modulo
\(2^{45}\), support Conjecture~\ref{conj:even}.  They give
\[
c_m=-4\quad(m=2,6,10,14),\qquad
c_m=-1\quad(m=4,12),\qquad
c_8=1.
\]
These data suggest that \(c_m\) may depend only on \(\nu(m)\), but we do not make this claim explicitly.

\section*{Acknowledgements}

The authors used Anthropic's Claude Opus 4.8 and Opus 5 AI models to clean up notation and editing of the manuscript. The first author thanks Ahmedabad University for the Claude subscription. The authors take full responsibility for the mathematical content of this paper.

\end{document}